\documentclass[reqno]{amsart}
\usepackage[margin=2.5cm]{geometry}
\usepackage[utf8]{inputenc}
\usepackage[T1]{fontenc}

\usepackage{amsmath,amsthm,amssymb,amsfonts}
\usepackage{graphicx,subfig}
\usepackage[english]{babel}
\usepackage{xcolor}
\usepackage{hyperref}

\usepackage{mathtools}
\usepackage{esint}

\newcommand{\R}{\mathbb{R}}
\newcommand{\N}{\mathbb{N}}

\newcommand{\B}{\mathbb{B}}

\newcommand{\s}{\mathbb{S}}
\newcommand{\eps}{\varepsilon}

\theoremstyle{definition}
\newtheorem{definition}{Definition}[section]
\newtheorem{exemple}[definition]{Example}

\theoremstyle{plain}
\newtheorem{theoreme}[definition]{Theorem}
 \newtheorem{proposition}[definition]{Proposition}
\newtheorem{lemme}[definition]{Lemma}
\newtheorem{corollaire}[definition]{Corollary}

\makeatletter
\setlength{\@fptop}{0pt}
\makeatother
\renewcommand\footnotemark{}


\begin{document}
\title{Multiple tubular excisions and large Steklov eigenvalues}
\thanks{\textbf{Keywords:} Spectral geometry, Steklov eigenvalues, Asymptotics of eigenvalues, Tubular neighbourhood}

\author{Jade Brisson} 
\thanks{ D\'epartement de math\'ematiques et de statistique, Pavillon Alexandre-Vachon, Universit\'e Laval, Qu\'ebec, QC, G1V 0A6, Canada; \href{mailto:jade.brisson.1@ulaval.ca}{\nolinkurl{jade.brisson.1@ulaval.ca}}}
\date{}


\begin{abstract}
 Given a closed Riemannian manifold $M$ and $b\geq2$ closed connected submanifolds $N_j\subset M$ of codimension at least $2$, we prove that the first non-zero eigenvalue of the domain $\Omega_\varepsilon\subset M$ obtained by removing the tubular neighbourhood of size $\varepsilon$ around each $N_j$ tends to infinity as $\varepsilon$ tends to $0$.
More precisely, we prove a lower bound in terms of $\varepsilon$, $b$, the geometry of $M$ and the codimensions and the volumes of the submanifolds and an upper bound in terms of $\varepsilon$ and the codimensions of the submanifolds. For eigenvalues of index $k=b\,,b+1\,,\ldots$, we have a stronger result: their order of divergence is $\varepsilon^{-1}$ and their rate of divergence is only depending on $m$ and on the codimensions of the submanifolds.
\end{abstract}

\maketitle

\section{Introduction}

Let $(\Omega,g)$ be a smooth compact connected Riemannian manifold with boundary of dimension $m\geq2$.
A real number $\sigma\in\R$ is called a Steklov eigenvalue if there exists a non-zero function $f\in C^\infty(\Omega)$ such that
\begin{equation*}
\begin{cases}
    \Delta f=0 &\mbox{ in }\Omega,\\
    \partial_n f=\sigma f &\mbox{ on }\partial\Omega.
\end{cases}
\end{equation*}
Here and elsewhere $\Delta=\Delta_g:C^\infty(\Omega)\to C^\infty(\Omega)$ is the Laplace operator induced by the Riemannian metric $g$, and $\partial_n$ denotes the outward-pointing normal derivative on $\partial\Omega$.  It is well known that the Steklov eigenvalues of $\Omega$ form an increasing sequence of positive numbers
\[0=\sigma_0(\Omega,g)<\sigma_1(\Omega,g)\leq\sigma_2(\Omega,g)\leq\ldots\nearrow +\infty\,,\]
where each eigenvalue is repeated according to its multiplicity. This sequence is known as the Steklov spectrum of $(\Omega,g)$. See \cite{GP2017,colbois2022recent} and references therein for more information on this problem.

The Steklov eigenvalues are given by the following variational characterization.
Let $E_k$ be the set of $(k+1)-$dimensional linear subspaces of the Sobolev space $W^{1,2}(\Omega)$.
For $f\in E_k\backslash\{0\}$, define its Rayleigh--Steklov quotient by
\[R_g(f):=\frac{\int\limits_\Omega |\nabla f|^2\,dv_g}{\int\limits_{\partial\Omega}f^2\,dS_g}\,,\]
where $dv_g$ is the volume measure induced by the metric $g$ and $dS_g$ is the boundary measure induced by the metric $g$. Its numerator is called the Dirichlet energy of the function $f$.
Then, for each $k\geq0$, we have
\begin{equation}\label{eq:varchar1}
\sigma_k(\Omega,g)=\min\limits_{E\in E_k}\max\limits_{0\neq f\in E}R_g(f)\,.
\end{equation}
For $k=1$, the characterization \eqref{eq:varchar1} is equivalent to
\begin{equation}\label{eq:varchar2}
\sigma_1=\min\bigg\{\int\limits_\Omega |\nabla f|^2\,dv_g~:~f\in W^{1,2}(\Omega)\,,\int\limits_{\partial\Omega}f\,dS_g=0\,,\int\limits_{\partial\Omega}f^2\,dS_g=1\bigg\}\,.
\end{equation}

\subsection{Tubular excision}
Let $(M,g)$ be a closed and compact smooth Riemannian manifold. Given $N\subset M$ a closed connected submanifold of dimension $0\leq n\leq m-2$ and $\varepsilon>0$, consider the tubular neighbourhood $N^\varepsilon$ of size $\varepsilon$ around $N$ defined by $N^\varepsilon=\{x\in M~|~\text{dist}(x,N)\leq\varepsilon\}$. 
Here and throughout the paper, $\text{dist}$ is the distance induced by the Riemannian metric $g$. 
In \cite{brisson}, the Steklov problem on the domain $\Omega_\varepsilon:=M\backslash N^\varepsilon$ obtained by the excision of the tubular neighbourhood is studied under the crucial assumption that $N$ is connected.  
This provides new examples of domains $\Omega_\varepsilon\subset M$ with arbitrarily large Steklov eigenvalues. In fact, in \cite[Theorem 1.1]{brisson}, it is proved that, for every $k\geq1$, $\varepsilon\sigma_k(\Omega_\varepsilon,g)$ converges as $\varepsilon\to0$ to a limit that only depends on the dimensions of $M$ and $N$. The proof is based on the following main arguments. 
We first noted that a neighbourhood $N^\delta\backslash N^\varepsilon$ of the boundary $\partial\Omega_\varepsilon$ is quasi-isometric to the product manifold $(\varepsilon,\delta)\times N\times\s^{m-n-1}$. Then, we used the classical Dirichlet--Neumann bracketing method in order to bound the Steklov eigenvalues of $\Omega_\varepsilon$ between eigenvalues of mixed problems (Steklov--Neumann and Steklov--Dirichlet problems) on the product manifold. Those eigenvalues were then computed by separation of variables. With this method, the eigenvalues are naturally indexed by two indices $k\,,p$, where the index $k$ refers to the Laplace eigenvalues of the submanifold $N$ and the index $p$ refers to the Laplace eigenvalues of the sphere $\s^{m-n-1}$ (see Lemma \ref{lemme:asymp} below).
This argument can be adapted on a domain $\Omega_\varepsilon$ obtained by the excision of $b\geq2$ tubular neighbourhoods around connected submanifolds $N_j\subset M$ of positive condimension to give the rate of divergence of $\sigma_k(\Omega_\varepsilon)$ for $k\geq b$. This result is described in the next theorem. Recall that the distinct Laplace eigenvalues on the $(m-n-1)-$dimensional sphere are given by $\lambda_{(i)}=i(i+m-n-2)$. The multiplicity of $\lambda_{(i)}$ is $m_i=\binom{m-n-1+i}{m-n-1}-\binom{m-n+i-3}{m-n-1}$. Throughout the paper, the notion of truncated Steklov spectrum, as defined below, is used.

\begin{definition}\label{def:truncatedss}
The truncated Steklov spectrum is the following set of eigenvalues:
\[\{\sigma_\ell(\Omega)~:~\ell\geq b\}\,.\]
\end{definition}

\begin{theoreme}\label{thm:convergence}
Let $M$ be a closed Riemannian manifold of dimension $m\geq 2$. Let $N_1\,,\ldots\,,N_b\subset M$ be $b\geq2$ disjoint closed connected submanifolds such that the dimension of $N_j$ is $0\leq n_j\leq m-2$. For $\varepsilon>0$ small enough, the truncated Steklov spectrum of $\Omega_\varepsilon:=M\backslash\bigsqcup\limits_{j=1}^bN_j^\varepsilon$ is given by a disjoint union of $b$ families $\mathcal{S}(N_j)$ of eigenvalues 
\[\{\sigma_\ell(\Omega_\varepsilon)~:~\ell\geq b\}=\bigsqcup\limits_{j=1}^b \mathcal{S}(N_j)\,.\]
Each family $\mathcal{S}(N_j)$ is the contribution of a submanifold $N_j$ and is given by
\[\mathcal{S}(N_j)=\bigg\{\sigma_{k,p}^j(\varepsilon)~:~\{k,p\}\in\N^2\setminus\{(0,0)\}\bigg\}\,,\]
if $N_j$ is not a point, or by
\[\mathcal{S}(N_j)=\{\sigma_k^j(\varepsilon)~:~k>0\}\,,\]
if $N_j$ is a point.
The eigenvalues in each family have the following behaviour.
\begin{enumerate}
\item  Suppose that $N_j$ is not a point. Let $(k,p)\in\N^2\setminus\{(0,0)\}$. For $p=0$, set $q=0$ and for $p>0$, let $q>0$ be such that $m_0+\cdots+m_{q-1}\leq p< m_0+\cdots+m_{q-1}+m_{q}$. The following limit holds
\begin{gather}\label{eq:convergence}
  \lim\limits_{\varepsilon\to0}\varepsilon\sigma_{k,p}^j(\varepsilon)=m-n_j-2+q\,.
\end{gather}
In particular, if $n_j=m-2$ and $m>2$, then for $p=0$, this limit is 0. In that case, the following improvement holds for each $k>0$,
\begin{gather}\label{eq:logimprovement}
  \lim\limits_{\varepsilon\to0}\varepsilon|\log\varepsilon|\sigma_{k,0}^j(\varepsilon)=1\,.
\end{gather}
\item Suppose that $N_j$ is a point. Let $k>0$. Choose the unique $q>0$ such that $m_0+\cdots+m_{q-1}\leq k< m_0+\cdots+m_{q-1}+m_{q}$. The following limit holds
\begin{gather}\label{eq:convergencepoint}
  \lim\limits_{\varepsilon\to0}\varepsilon\sigma_k^j(\varepsilon)=m-2+q\,.
\end{gather}
\end{enumerate}
\end{theoreme}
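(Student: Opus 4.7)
I would mirror the strategy of the single-submanifold case from \cite{brisson}: combine Dirichlet--Neumann bracketing with separation of variables on a model warped product. Fix $\delta>0$ small enough that Fermi coordinates are valid on each $N_j^{2\delta}$ and these neighbourhoods are pairwise disjoint. Split $\Omega_\varepsilon$ along $\bigsqcup_{j=1}^{b}\partial N_j^\delta$ into $b$ annular pieces $T_\varepsilon^j:=N_j^\delta\setminus N_j^\varepsilon$ and a bulk $B:=M\setminus\bigsqcup_{j=1}^{b}N_j^\delta$. Dirichlet--Neumann bracketing at these cuts sandwiches $\sigma_\ell(\Omega_\varepsilon)$ between the $\ell$-th eigenvalues of two decoupled problems: on each $T_\varepsilon^j$, Steklov on $\partial N_j^\varepsilon$ with Neumann (resp.\ Dirichlet) on $\partial N_j^\delta$, together with a pure Neumann (resp.\ Dirichlet) Laplace problem on $B$. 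Since $\partial\Omega_\varepsilon\cap B=\emptyset$, the bulk contributes only a single zero eigenvalue (the constant) in the Neumann case and nothing in the Dirichlet case.

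In Fermi coordinates around $N_j$ the metric on $T_\varepsilon^j$ is a quasi-isometric perturbation of the warped product $dr^2+g_{N_j}+r^2 g_{\s^{m-n_j-1}}$, so on the model Steklov eigenfunctions separate as $f(r,y,\omega)=R(r)\Phi_k(y)Y_p(\omega)$ with $\Phi_k$ and $Y_p$ Laplace eigenfunctions on $N_j$ and the normal sphere (trivially on $N_j$ when $N_j$ is a point). The mixed boundary conditions then reduce the problem to a one-dimensional Steklov problem for $R$, implicitly defining $\sigma_{k,p}^j(\varepsilon)$. As $\varepsilon\to0$ the behaviour is governed by the Euclidean-cone limit, in which the radial ODE takes the Euler--Cauchy form $R''+\tfrac{m-n_j-1}{r}R'-\tfrac{\lambda_{(q)}}{r^2}R=0$ with $\lambda_{(q)}=q(q+m-n_j-2)$; its power-law exponents immediately yield $\lim_{\varepsilon\to0}\varepsilon\sigma_{k,p}^j(\varepsilon)=m-n_j-2+q$. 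In the borderline codimension-two case with $q=0$, the two exponents collapse to $0$, a $\log r$ solution appears, and this produces the improved rate \eqref{eq:logimprovement}. The Laplace index $k$ enters only as a subdominant lower-order perturbation of this ODE near $r=0$ and does not affect the limit.

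To match indices, observe that in the Neumann bracketing each $T_\varepsilon^j$ contributes a $(0,0)$-mode of Steklov eigenvalue exactly zero, giving $b$ zeros in addition to the bulk zero; these correspond to the near-zero eigenvalues $\sigma_0,\ldots,\sigma_{b-1}$ of $\Omega_\varepsilon$, generated by functions approximately constant on each $\partial N_j^\varepsilon$ and decaying through $B$. In the Dirichlet bracketing the $(0,0)$-modes are positive but of order $1/\varepsilon$; a standard min--max comparison then shows that for $\ell\geq b$ both sandwiching sequences and $\sigma_\ell(\Omega_\varepsilon)$ agree up to $o(1/\varepsilon)$ with some $\sigma_{k,p}^j(\varepsilon)$ with $(k,p)\neq(0,0)$. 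Reindexing yields the identification $\{\sigma_\ell(\Omega_\varepsilon):\ell\geq b\}=\bigsqcup_{j=1}^{b}\mathcal{S}(N_j)$.

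The hardest part is this last reindexing: reconciling the two bracketings and correctly pairing the spurious Dirichlet $(0,0)$-modes with the genuine near-zero eigenvalues $\sigma_0,\ldots,\sigma_{b-1}$ of $\Omega_\varepsilon$, so that the ``$\ell\geq b$'' truncation falls in the right place in both sandwich sequences simultaneously. Once this book-keeping is settled and the quasi-isometric transfer from the model warped product to $T_\varepsilon^j$ is controlled, the asymptotic formulas \eqref{eq:convergence}--\eqref{eq:convergencepoint} drop out of the radial ODE analysis outlined above.
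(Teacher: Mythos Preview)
Your plan is essentially the paper's proof: Dirichlet--Neumann bracketing on the annular collars $A=\bigsqcup_j N_j^\delta\setminus N_j^\varepsilon$, quasi-isometric transfer to the model warped product, and then the separated radial ODE asymptotics (which the paper packages as Lemma~\ref{lemme:asymp}, imported from \cite{brisson}). One small correction: the bulk $B=M\setminus\bigsqcup_j N_j^\delta$ carries no Steklov boundary, so functions supported there have vanishing denominator in the Rayleigh quotient and are simply inadmissible---$B$ contributes \emph{no} eigenvalue, not even a zero, to either bracket. With that fix your Neumann side has exactly $b$ zeros (one $(0,0)$-mode per collar), your Dirichlet side has the $b$ spurious $(0,0)$-modes, the index shift in $\sigma_\ell\leq\sigma_{\ell+1}^{SD}$ absorbs the offset, and the truncation at $\ell\geq b$ lines up on both sides as you want.
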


In the limit $\eps\to 0$, the ordered eigenvalues $\sigma_\ell(\Omega_\eps)$ for $\ell\geq b$ correspond to the smallest cluster at $n=\max n_j$ and at $q=0$ if $n\neq 0$ or at $q=1$ if $n=0$.
\begin{corollaire}\label{cor:introsigmak}
 Let $M$ be a smooth compact Riemannian manifold of dimension $m\geq 2$ and let $N_1\,,\ldots\,,N_b\subset M$ be $b$ smooth closed connected embedded submanifolds of dimension $0\leq n_j\leq m-2$.
Then for each $\ell\geq b$, if $n=\max n_j$, 
  \begin{gather}\label{eq:limitsigmakintro}
    \lim\limits_{\eps\to0}\eps\sigma_{\ell}(\Omega_\varepsilon)=m-n-2+q\,,
  \end{gather}
where $q=0$ if $n>0$ and $q=1$ if $n=0$.
Moreover, in the case where $n=m-2\neq0$, the following holds for each $\ell\in\N$,
  \[\lim\limits_{\eps\to0}\eps|\log\varepsilon|\sigma_{\ell}(\Omega_\varepsilon)=1\,.\]
\end{corollaire}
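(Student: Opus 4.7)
The strategy is to invoke Theorem~\ref{thm:convergence} and identify the corollary's limit with the smallest cluster of constants $c^j_{k,p}:=m-n_j-2+q(p)$ appearing there.

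I first decompose the truncated spectrum $\{\sigma_\ell(\Omega_\eps):\ell\geq b\}=\bigsqcup_{j=1}^b\mathcal{S}(N_j)$ and minimize $c^j_{k,p}$ over all admissible $(j,k,p)$. When $n=\max n_j>0$, the minimum $c^*=m-n-2$ is attained with $p=0$ and any $k\geq 1$ along the family of any $N_j$ with $n_j=n$, so there are infinitely many admissible triples whose constant equals $c^*$. When $n=0$, all $N_j$ are points and $c^*=m-1$ is attained at $q=1$ (indices $k$ with $1\leq k\leq m_1$). In both cases the minimum matches the value claimed in the corollary.

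For each fixed $\ell\geq b$, the eigenvalue $\sigma_\ell(\Omega_\eps)$ is the $(\ell-b+1)$-st order statistic of the multiset $\bigsqcup_j\mathcal{S}(N_j)$. For the upper bound, I pick $\ell-b+1$ triples whose constants equal $c^*$, bound $\sigma_\ell(\Omega_\eps)$ by the largest of the corresponding $\sigma^j_{k,p}(\eps)$, and let $\eps\to 0$ using Theorem~\ref{thm:convergence}. For the matching lower bound $\liminf\eps\sigma_\ell(\Omega_\eps)\geq c^*$, I re-open the Dirichlet--Neumann bracketing that proves Theorem~\ref{thm:convergence}: each $\sigma^j_{k,p}$ is squeezed between explicit mixed Steklov eigenvalues on the product model $(\eps,\delta)\times N\times\s^{m-n-1}$, and the explicit eigenvalues satisfy bounds of the form $\eps\sigma^j_{k,p}\geq c^j_{k,p}-o(1)$ with $o(1)$ independent of $(j,k,p)$. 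This uniform lower bound is what lets me pass from the pointwise convergence of Theorem~\ref{thm:convergence} to the convergence of the sorted order statistic.

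The logarithmic refinement in the case $n=m-2\neq 0$ follows the same template, using the sharper scale $\eps|\log\eps|\sigma^j_{k,0}\to 1$ from \eqref{eq:logimprovement} in place of $c^j_{k,0}=0$. Extending the conclusion to $\ell<b$ requires treating the $b-1$ small eigenvalues outside the truncated spectrum; I would control them through a logarithmic capacity estimate suited to codimension-2 excisions, showing they too scale as $(\eps|\log\eps|)^{-1}$. The main obstacle in all parts is the uniformity of the lower bound over $(j,k,p)$: since Theorem~\ref{thm:convergence} records only pointwise convergence of $\eps\sigma^j_{k,p}$, the corollary is not a purely formal consequence of that theorem and requires revisiting the bracketing step of \cite{brisson} to obtain uniform remainder estimates.
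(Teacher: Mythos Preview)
The paper gives no separate proof of the corollary; it is stated immediately after Theorem~\ref{thm:convergence} with the one-line justification that the ordered eigenvalues for $\ell\geq b$ ``correspond to the smallest cluster''. Your overall strategy---identify the minimal cluster constant $c^*=m-n-2+q$ and sandwich $\sigma_\ell$ between eigenvalues with that limit---is exactly the intended reading.

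You correctly flag a uniformity issue, but its resolution is simpler than re-opening the separation-of-variables computations of \cite{brisson}. The Dirichlet--Neumann bracketing in the proof of Theorem~\ref{thm:convergence} already operates on \emph{ordered} eigenvalues:
\[
\sigma_\ell(\Omega_\eps,g)\geq (1+\eps_0)^{-(2m+1)}\sigma_\ell^{SN}(A,\tilde g)\geq (1+\eps_0)^{-(2m+1)}\sigma_b^{SN}(A,\tilde g)=(1+\eps_0)^{-(2m+1)}\min_{1\leq j\leq b}\sigma_1^{SN}\bigl(N_j^\delta\setminus N_j^\eps,\tilde g_j\bigr).
\]
The last expression is a minimum over \emph{finitely many} quantities, each with a pointwise asymptotic given by Lemma~\ref{lemme:asymp}; no uniform remainder in $(k,p)$ is needed. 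Your upper-bound argument (pick $\ell-b+1$ eigenvalues in the lowest cluster and take their maximum) is fine as stated, since only finitely many pointwise limits are involved; equivalently one bounds $\sigma_{\ell+1}^{SD}(A,\tilde g)$ by the largest of $\ell+1$ specific Steklov--Dirichlet eigenvalues from the lowest cluster.

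Your treatment of $\ell<b$ in the logarithmic refinement goes beyond what the paper proves. The upper bound $\limsup\eps|\log\eps|\sigma_\ell\leq 1$ for $1\leq\ell\leq b-1$ does follow from the Steklov--Dirichlet estimate in the proof of Theorem~\ref{thm:upperbound}, but the matching lower bound is not established anywhere in the paper: Theorem~\ref{thm:largeegv} only yields $\liminf\eps^{1/(m+1)}\sigma_1\geq C$. So either the phrase ``for each $\ell\in\N$'' is a slip for ``for each $\ell\geq b$'', or an additional argument of the capacity type you sketch would indeed be required; the paper does not supply one.
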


The method used to prove the previous theorem does not give an asymptotic for the first $b-1$ non-zero Steklov eigenvalues since the Steklov--Neumann spectrum of a disjoint union of a finite number of domains $D_j$ is the disjoint union of their spectra. 
Indeed, for an eigenpair to satisfy the Steklov--Neumann problem on $D:=\cup D_j$, it must satisfy the Steklov--Neumann problem on each $D_j$, proving that the spectrum of $D$ is included in the disjoint union of the spectra. Conversely, if there is an eigenpair for one of the $D_j$, by prolonging the eigenfunction by $0$ to the other domains, we prove that the eigenvalue is an eigenvalue for $D$.
Thus, the first $b$ Steklov--Neumann eigenvalues are zero. Nonetheless, since the Steklov--Dirichlet eigenvalues are non-zero, we obtain the following upper bound using the method described above.
\begin{theoreme}\label{thm:upperbound}
Let $M$ be a Riemannian manifold of dimension $m\geq 2$. Let $N_1\,,\ldots\,,N_b\subset M$ be $b\geq2$ disjoint closed connected submanifolds such that the dimension of $N_j$ is $0\leq n_j\leq m-2$. Let $n:=\min n_j$. For $1\leq\ell\leq b-1$, we have
\begin{equation}\label{eq:upperbound}
\limsup\limits_{\varepsilon\to0}\varepsilon\sigma_\ell(\Omega_\varepsilon)\leq\max\{1,m-n-2\}\,.
\end{equation}
\end{theoreme}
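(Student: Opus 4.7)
The plan is to apply the variational characterization \eqref{eq:varchar1} with $k = b-1$ to a $b$-dimensional trial subspace of $W^{1,2}(\Omega_\varepsilon)$ built from $b$ explicit test functions, one per submanifold $N_j$. This implements the Steklov--Dirichlet bracketing alluded to in the paragraph preceding the theorem: since the Steklov--Neumann spectrum of the disjoint union $\bigsqcup_j (N_j^\delta \setminus N_j^\varepsilon)$ contains $0$ with multiplicity $b$, one cannot bound $\sigma_1,\dots,\sigma_{b-1}$ from above by Steklov--Neumann eigenvalues; but the Steklov--Dirichlet eigenvalues, obtained by imposing a Dirichlet condition on the outer boundary $\partial N_j^\delta$ and the Steklov condition on $\partial N_j^\varepsilon$, are strictly positive and will yield non-trivial bounds.

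Fix $\delta > 0$ smaller than the focal radius of each $N_j$ and less than half the minimum pairwise distance between the $N_j$, so that the tubular neighbourhoods $N_j^\delta$ are disjoint. For each $j$, I take $f_j \in W^{1,2}(\Omega_\varepsilon)$ to be supported in $A_{j,\varepsilon} := N_j^\delta \setminus N_j^\varepsilon$ and, in Fermi coordinates $(r,y,\theta)$ around $N_j$, to depend only on the radial variable: $f_j(r,y,\theta) = \phi_j(r)$, where $\phi_j : [\varepsilon,\delta] \to \R$ minimizes $\int_\varepsilon^\delta \phi'(r)^2 r^{m-n_j-1}\, dr$ subject to $\phi_j(\varepsilon)=1$ and $\phi_j(\delta)=0$. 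The Euler--Lagrange equation $(r^{m-n_j-1}\phi_j')' = 0$ has an explicit solution, affine in $r^{2-m+n_j}$ when $m - n_j \geq 3$ and logarithmic when $n_j = m - 2$.

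Using the Fermi-coordinate metric expansion exactly as in \cite{brisson}, the Riemannian volume form near $N_j$ equals $r^{m-n_j-1}(1+O(r))\, dr\, dv_{N_j}\, d\theta$; inserting $f_j$ into the Rayleigh quotient and evaluating with the explicit $\phi_j$ yields
\[R_g(f_j) = \frac{m - n_j - 2}{\varepsilon}(1+o(1)) \text{ when } m - n_j \geq 3, \qquad R_g(f_j) = \frac{1+o(1)}{\varepsilon |\log \varepsilon|} \text{ when } n_j = m - 2.\]
Since the $f_j$ have pairwise disjoint supports, both in $\Omega_\varepsilon$ and on $\partial \Omega_\varepsilon$, every $f = \sum_j c_j f_j$ in $E := \mathrm{span}(f_1,\dots,f_b)$ satisfies $R_g(f) = \sum_j c_j^2 D_j / \sum_j c_j^2 B_j$, where $D_j = \int |\nabla f_j|^2\,dv_g$ and $B_j = \int_{\partial\Omega_\varepsilon} f_j^2\, dS_g$; as a convex combination of the $R_g(f_j)$, this is at most $\max_j R_g(f_j)$.

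Applying \eqref{eq:varchar1} to the $b$-dimensional subspace $E$ yields $\sigma_{b-1}(\Omega_\varepsilon) \leq \max_j R_g(f_j)$, hence $\sigma_\ell(\Omega_\varepsilon) \leq \max_j R_g(f_j)$ for every $1 \leq \ell \leq b - 1$. Passing to the limsup: when some $n_j < m - 2$, so that $n = \min n_j < m - 2$ and $m - n - 2 \geq 1$, the maximum is attained at an index $j$ with $n_j = n$ and gives $\limsup \varepsilon \sigma_\ell \leq m - n - 2 = \max\{1, m - n - 2\}$; when $n = m - 2$, all terms decay as $1/(\varepsilon |\log \varepsilon|)$ and $\limsup \varepsilon \sigma_\ell = 0 \leq 1 = \max\{1, 0\}$. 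Either way, \eqref{eq:upperbound} holds. The only technical point requiring care is the uniform-in-$\varepsilon$ control of the lower-order metric errors from the Fermi parametrization, but this is handled identically to the single-excision argument in \cite{brisson} and presents no new obstacle.
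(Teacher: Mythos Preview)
Your proof is correct and follows essentially the same route as the paper's: both use the first Steklov--Dirichlet eigenfunctions on the annuli $N_j^\delta\setminus N_j^\varepsilon$, extended by zero, as a $b$-dimensional trial space in the min-max. The paper phrases this abstractly via the Dirichlet--Neumann bracketing inequality $\sigma_\ell(\Omega_\varepsilon)\leq\sigma_{\ell+1}^{SD}(A)$, then passes through the quasi-isometry Proposition~\ref{prop:quasiisovp} and reads off the asymptotics from Lemma~\ref{lemme:asymp}; you instead write down the radial minimizers $\phi_j$ explicitly and compute their Rayleigh quotients directly in Fermi coordinates. The two are equivalent in substance --- your $f_j$ are exactly the first Steklov--Dirichlet eigenfunctions of the model annuli --- and the only trade-off is that your version avoids quoting Lemma~\ref{lemme:asymp} at the cost of redoing a short ODE computation.
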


Using a different method, we also obtain a lower bound for the first non-zero eigenvalue. Namely, we use Poincaré type inequalities to bound below the Dirichlet energy near the boundary of the tubular neighbourhoods.
This method is inspired by the proof of \cite[Proposition 3.3]{CEG19} where the authors estimate the energy cost of the Rayleigh--Steklov quotient on a neighbourhood the boundary of the form $\partial\Omega\times\lbrack\varepsilon,2\varepsilon\rbrack$. However, this choice of interval can not be made here since one would have to suppose that $n\neq m-2$ for the argument to work.
\begin{theoreme}\label{thm:largeegv}
Let $(M,g)$ be a compact Riemannian manifold of dimension $m\geq2$. Let $b\geq2$ be an integer. Let $N_1\,,N_2\,,\ldots\,,N_b\subset M$ be $b$ disjoint closed connected submanifolds such that $N_j$ is of dimension $0\leq n_j\leq m-2$. The first non-zero Steklov eigenvalue of $\Omega_\varepsilon:=M\backslash\bigg(\bigsqcup\limits_{j=1}^b N_j^\varepsilon\bigg)$ satisfies 
\begin{equation}\label{eq:lowerbound}
\liminf\limits_{\varepsilon\to0}\varepsilon^{\frac{1}{m+1}}\sigma_1(\Omega_\varepsilon)\geq C\,,
\end{equation}
where $C$ is an explicit non-zero constant depending on $M$, $b$ and on the submanifolds $N_j$ given by Equation \eqref{eq:constantC}.
\end{theoreme}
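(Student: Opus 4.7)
My plan is to apply the variational characterization \eqref{eq:varchar2}: for any $f\in W^{1,2}(\Omega_\varepsilon)$ satisfying $\int_{\partial\Omega_\varepsilon}f\,dS_g=0$ and $\int_{\partial\Omega_\varepsilon}f^2\,dS_g=1$, it suffices to show that the Dirichlet energy $E(f):=\int_{\Omega_\varepsilon}|\nabla f|^2\,dv_g$ obeys $E(f)\geq C\varepsilon^{-1/(m+1)}$. The key ingredient is a trace-type inequality on each tubular shell $T_j:=N_j^\delta\setminus N_j^\varepsilon$, for an auxiliary outer radius $\delta\in(\varepsilon,\delta_0)$ smaller than the focal radius of each $N_j$. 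Working in Fermi coordinates $(r,y,\theta)\in(\varepsilon,\delta)\times N_j\times\mathbb{S}^{m-n_j-1}$, in which the volume form $\sqrt{|g|}\,dr\,dy\,d\theta$ is comparable to $r^{m-n_j-1}\,dr\,dy\,d\theta$, I would start from the identity $u(\varepsilon,y,\theta)=u(r,y,\theta)-\int_\varepsilon^r\partial_s u\,ds$, apply Cauchy--Schwarz, average over $r\in(\varepsilon,\delta)$, integrate over the cross-section, and use $r\geq\varepsilon$ to obtain
\[
\int_{\partial N_j^\varepsilon}u^2\,dS_g\;\leq\;\frac{C_1}{\delta-\varepsilon}\int_{T_j}u^2\,dv_g\;+\;C_2(\delta-\varepsilon)\int_{T_j}|\nabla u|^2\,dv_g
\]
for every $u\in W^{1,2}(T_j)$, with explicit constants $C_1,C_2$ depending only on the local geometry of $(M,N_j)$.

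The second step handles the zero-boundary-mean constraint. Setting $c$ equal to the $L^2$-mean of $f$ over $\Omega_\varepsilon$ and writing $g:=f-c$, we have $\nabla g=\nabla f$ and $\int_{\partial\Omega_\varepsilon}g^2\,dS_g=1+c^2|\partial\Omega_\varepsilon|\geq 1$. The global Poincar\'e inequality $\|g\|_{L^2(\Omega_\varepsilon)}^2\leq C_P E$ holds with $C_P$ bounded uniformly as $\varepsilon\to 0$ by classical spectral stability under excisions of codimension at least $2$. To squeeze out a small factor on the thin shell $\Omega^{\mathrm{tub}}:=\bigsqcup_j T_j$, I would apply H\"older's inequality combined with the Sobolev embedding $W^{1,2}(\Omega_\varepsilon)\hookrightarrow L^{2(m+1)/(m-1)}(\Omega_\varepsilon)$ (whose constant is uniformly bounded by extension to the closed manifold $M$), giving $\|g\|_{L^2(\Omega^{\mathrm{tub}})}^2\leq C\,|\Omega^{\mathrm{tub}}|^{2/(m+1)}E$. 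Summing the trace inequality over $j$ applied to $g$ and inserting this refined bound yields
\[
1\;\leq\;\frac{C\,|\Omega^{\mathrm{tub}}|^{2/(m+1)}}{\delta-\varepsilon}\,E\;+\;C(\delta-\varepsilon)\,E.
\]

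The final step is to calibrate $\delta$ and $\eta:=\delta-\varepsilon$ simultaneously. Using the tubular volume asymptotics $|T_j|\lesssim \delta^{m-n_j}-\varepsilon^{m-n_j}$, minimizing the right-hand side in $\eta$ for each fixed $\delta$, and then choosing $\delta$ as a specific power of $\varepsilon$ tied to the codimensions $m-n_j$ produces the bound $E\geq C\varepsilon^{-1/(m+1)}$. The explicit constant \eqref{eq:constantC} comes out of tracking $C_1,C_2,C_P$, the Sobolev constant, $b$, and the geometric quantities $|N_j|$ and $|\mathbb{S}^{m-n_j-1}|$ through the optimization. The main obstacle is precisely this two-scale optimization, made delicate by the fact that, as noted in the paragraph preceding Theorem \ref{thm:largeegv}, the simpler choice $\delta=2\varepsilon$ from \cite[Proposition 3.3]{CEG19} is excluded because that argument fails when some $n_j=m-2$; consequently $\delta$ must be treated as an independent parameter calibrated alongside $\eta$ so that the resulting lower bound is uniform in the codimensions. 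A secondary subtlety is that the individual boundary means $\int_{\partial N_j^\varepsilon}f\,dS_g$ need not vanish even though their sum does, which is why the global replacement $f\mapsto f-\bar f_{\Omega_\varepsilon}$ — rather than a piecewise-on-$T_j$ argument — is essential.
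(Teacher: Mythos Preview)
Your trace inequality and the subtraction $g=f-\bar f_{\Omega_\varepsilon}$ are both sound, and the uniform Neumann--Poincar\'e constant is available (this is exactly Proposition~\ref{prop:neumannev}). The genuine gap is the H\"older--Sobolev step: the gain $|\Omega^{\mathrm{tub}}|^{2/(m+1)}$ is too weak when some $n_j=m-2$ and $m\geq 3$. Concretely, with $\eta=\delta-\varepsilon$ and, say, $\delta=\varepsilon^{\gamma}$ for $\gamma<1$, the dominant shell has volume $|T_j|\sim \delta^{m-n_j}=\delta^{2}$, so
\[
\frac{|\Omega^{\mathrm{tub}}|^{2/(m+1)}}{\eta}\;\sim\;\frac{\delta^{4/(m+1)}}{\delta}\;=\;\delta^{(3-m)/(m+1)},
\]
which does not decay (it diverges for $m\geq 4$ and is merely $O(1)$ for $m=3$). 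Taking $\eta\ll\varepsilon$ instead (a very thin shell) does not save the argument either: writing $\eta=\varepsilon^{\gamma}$ with $\gamma>1$ gives $|T_j|\sim \eta\,\varepsilon$ and the first term scales like $\varepsilon^{(2-\gamma(m-1))/(m+1)}$, which forces $\gamma\leq 1/(m-1)<1$, a contradiction. Hence no choice of $(\delta,\eta)$ can produce the exponent $-1/(m+1)$ uniformly across codimensions. This is precisely the obstruction alluded to before the statement of the theorem: in codimension $2$ the shell volume is only quadratic in its thickness, and no H\"older improvement against a subcritical Sobolev norm can beat the $1/\eta$ loss.

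The paper circumvents this by \emph{not} subtracting the global volume mean. It splits into two regimes according to the size of
\[
\Sigma:=\sum_{j=1}^{b}|N_j|\,|\s_\varepsilon^{d_j}|\Bigl(\fint_{\partial N_j^\varepsilon}f\Bigr)^{2}.
\]
If $\Sigma\leq 1/4$, then Lemma~\ref{prop:dirichletenergy} and the asymptotics of $\sigma_1^{SN}(T_j(\varepsilon,\delta))$ (Lemma~\ref{lemme:asymp}) already give $E\gtrsim \varepsilon^{-1}$. If $\Sigma\geq 1/4$, the boundary means themselves are \emph{large}, of order $\varepsilon^{-d_j/2}$, and by the zero total-mean condition at least two of them have opposite signs. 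The paper then tracks the zeroth Fourier coefficient $a_0^{j}(r)$ along $r\in[\varepsilon,\varepsilon^{\alpha}]$: either it drops by a factor $2$, and a direct Cauchy--Schwarz estimate on the radial derivative gives $E\gtrsim\varepsilon^{-\alpha}$; or it does not drop, so the \emph{volume} means $\fint_{T_j(\varepsilon,\varepsilon^{\alpha})}f$ remain large and of opposite signs, and Lemma~\ref{lemme:poincare} applied to $f$ (not $f-c$) with $V_1=T_1(\varepsilon,\varepsilon^{\alpha})$, $V_2=T_2(\varepsilon,\varepsilon^{\alpha})$ yields $E\gtrsim \varepsilon^{\alpha m-1}$. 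Balancing $\alpha=1/(m+1)$ gives the claim. The crucial point your approach misses is that the \emph{largeness} of the per-component boundary means --- not merely the smallness of $\|g\|_{L^2}$ --- is what drives the lower bound in the ``locally constant'' regime.
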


The lower bound \eqref{eq:lowerbound} is not sharp. We expect a sharp lower bound to match the upper bound \eqref{eq:upperbound}, as it can be observed in the following example for the $2-$dimensional sphere with two diametrically opposed perforations.
\begin{exemple}
Let $\s^2\subset\R^3$ be the unit sphere and let $N,S\in \s^2$ be the north pole and the south pole respectively. For $0<\varepsilon<\frac{\pi}{2}$, consider the Steklov problem on
\[\Omega_\varepsilon=\s^2\backslash(B_\varepsilon(N)\cup B_\varepsilon(S))\,.\]
Consider the following parametrization on $\Omega_\varepsilon$:
$$\phi(\theta,\psi)=(\cos\theta\sin\psi,\sin\theta\sin\psi,\cos\psi)\,$$
In these coordinates, the normal derivative of a function $u:\Omega_\varepsilon\to\R$ on $\partial B_\varepsilon(N)$ is given by
\[\partial_n u=-u_\psi\,,\]
and on $\partial B_\varepsilon(S)$, the normal derivative is given by
\[\partial_n u=u_\psi\,.\]

Thus, the Steklov problem on $\Omega_\varepsilon$ in these coordinates is
\begin{equation*}
    \begin{cases}
    u_{\psi\psi}+\cot\psi u_\psi+\sec^2\psi u_{\theta\theta}=0&\mbox{ in $\Omega_\varepsilon$}\,,\\
    -u_\psi(\theta,\varepsilon)=\sigma u(\theta,\varepsilon)\,,\\
    u_\psi(\theta,\pi-\varepsilon)=\sigma u(\theta,\pi-\varepsilon)\,.
    \end{cases}
\end{equation*}
Let us use the method of separation of variables, i.e. suppose that $u(\theta,\psi)=F(\theta)G(\psi)$. We obtain the following problems:
\begin{equation}\label{eq:prob1}
    \begin{cases}
        -F''=\lambda F \,,\\
        F(0)=F(2\pi)\,.
    \end{cases}
\end{equation}
\begin{equation}\label{eq:prob2}
    \begin{cases}
    \sin^2\psi G''+\sin\psi\cos\psi G'-\lambda G=0\,,\\
    -G'(\varepsilon)=\sigma G(\varepsilon)\,,\\
    G'(\pi-\varepsilon)=\sigma G(\pi-\varepsilon)\,.
    \end{cases}
\end{equation}
Problem \eqref{eq:prob1} has a solution for $\lambda=n^2$, $n\in\N$.
Then, let us solve \eqref{eq:prob2} for $\lambda_n=n^2$.
When $n=0$, the two independent solutions of the differential equation are
\begin{gather*}
    G_0^1(\psi)=1\,,\\
    G_0^2(\psi)=\log(\tan(\frac{\psi}{2}))\,.
\end{gather*}
The function $G_0^1$ has the associated eigenvalue $\sigma_0=0$. The function $G_0^2$ has the associated eigenvalue
\[\sigma=\frac{1}{\sin\varepsilon\log(\cot(\frac{\varepsilon}{2}))}\,.\]
When $n\geq1$, the solution to the differential equation is
\[G(\psi)=c_1\cot^n\bigg(\frac{\psi}{2}\bigg)+c_2\tan^n\bigg(\frac{\psi}{2}\bigg)\,.\]
In order for the constants $c_1$ et $c_2$ to satisfy the limiting conditions, they must satisfy the following matrix equation:
$$
\begin{pmatrix}
\frac{n\cot^{n-1}(\frac{\varepsilon}{2})\csc^2(\frac{\varepsilon}{2})}{2}-\sigma\cot^n(\frac{\varepsilon}{2})&\frac{-n\tan^{n-1}(\frac{\varepsilon}{2})\sec^2(\frac{\varepsilon}{2})}{2}-\sigma\tan^n(\frac{\varepsilon}{2})\\
\frac{-n\tan^{n-1}(\frac{\varepsilon}{2})\sec^2(\frac{\varepsilon}{2})}{2}-\sigma\tan^n(\frac{\varepsilon}{2})&\frac{n\cot^{n-1}(\frac{\varepsilon}{2})\csc^2(\frac{\varepsilon}{2})}{2}-\sigma\cot^n(\frac{\varepsilon}{2})
\end{pmatrix}
\begin{pmatrix}
c_1\\c_2
\end{pmatrix}=0\,.
$$
This equation admits a non trivial solution if the matrix is non invertible, that is if its determinant is zero:
\begin{multline*}
    \sigma^2\bigg(\cot^{2n}\bigg(\frac{\varepsilon}{2}\bigg)-\tan^{2n}\bigg(\frac{\varepsilon}{2}\bigg)\bigg)-2n\csc\varepsilon\sigma \bigg(\cot^{2n}\bigg(\frac{\varepsilon}{2}\bigg)+\tan^{2n}\bigg(\frac{\varepsilon}{2}\bigg)\bigg)\\
    +n^2\csc^2\varepsilon\bigg(\cot^{2n}\bigg(\frac{\varepsilon}{2}\bigg)-\tan^{2n}\bigg(\frac{\varepsilon}{2}\bigg)\bigg)=0\,.
\end{multline*}
This gives the following eigenvalues
\begin{gather*}
    \sigma_n^+=\frac{n(1+\tan^{2n}(\frac{\varepsilon}{2}))}{\sin\varepsilon(1-\tan^{2n}(\frac{\varepsilon}{2}))}\,,\\
    \sigma_n^-=\frac{n(1-\tan^{2n}(\frac{\varepsilon}{2}))}{\sin\varepsilon(1+\tan^{2n}(\frac{\varepsilon}{2}))}\,.
\end{gather*}
By the series expansion of $\sin(x)$ and of $\tan^{2n}(\frac{x}{2})$ at $x=0$, when $\varepsilon\to0$, the behaviour of the eigenvalues $\sigma_n^+$ and $\sigma_n^-$ is
\[\frac{n}{\varepsilon}+o(\varepsilon^{-k})\,,\]
for all $k>1$. 
\end{exemple}

\subsection{Plan of the paper}
The proofs of the main results are based on quasi-isometries, which are introduced in Section \ref{sec:quasiiso}. 
The proof of Theorems \ref{thm:convergence} and \ref{thm:upperbound} are discussed in Section \ref{sec:upperbound} and Theorem \ref{thm:largeegv} is demonstrated in Section \ref{sec:largeegv}.

\subsection{Notation and convention}
Here is a list of notations and conventions that are used throughout the paper.

\begin{itemize}
\item For a Riemannian manifold $M$, $g$ is its Riemannian metric and $m$ is its dimension. Moreover, $m\geq2$.
\item For a submanifold $N\subset M$, $n$ is its dimension, with $0\leq n\leq m-2$ and $h$ is the restriction of $g$ to $N$, with the convention that $h\equiv0$ if $N$ is a point. Moreover, $d=m-n-1$.
\item $|W|_g$ is the volume of the domain $W\subset M$ with respect to the metric $g$, with the convention that $|W|=1$ if $W$ is a point.
\item $g_0$ is the round metric on the $d-$dimensional sphere $\s^d$ and $\omega_d=|\s^d|_{g_0}$.
\end{itemize}

\section{Quasi-isometry}\label{sec:quasiiso}
In this section, we recall some useful results about quasi-isometry which are used to prove the main theorems.

\begin{definition}\label{def:quasiiso}
Let $g_1\,,g_2$ be two Riemannian metrics on a given manifold $M$. We say that $g_1$ and $g_2$ are quasi-isometric with constant $K\geq 1$ if for all $p\in M$ and for all $v\in T_pM\backslash\{0\}$,
\[\frac{1}{K}\leq\frac{g_1(v,v)}{g_2(v,v)}\leq K\,.\]
\end{definition}

Let $p\in (M,g)$. Using the expression of the metric in normal coordinates, it can be shown that a small ball around $p$ is quasi-isometric to a Euclidean ball of the same radius. 
Replacing $p$ by a submanifold $N\subset M$ of positive codimension, a similar result can be shown by using the expression of the metric in Fermi coordinates given as follow.
Given a point $p_0\in N$, there exists a system of coordinates $(y_1,\ldots,y_n)$ on a open neighbourhood $U\subset N$ containing $p_0$ and a small neighbourhood $\mathcal{O}\subset \{(q,v)~|~ q\in U\,,v\perp T_qN\}$ such that the exponential map $\exp|_\mathcal{O}$ is a diffeomorphism from $\mathcal{O}$ onto its image.
The Fermi coordinates around $p_0$ on $\exp(\mathcal{O})\subset M$ are given by $(x_1,\ldots,x_m):=(y_1,\ldots,y_n,\exp_{(y_1,\ldots,y_n)}^{-1})$. See \cite{tubes} for a concise presentation of these coordinates and their fundamental properties.
On $N\cap \exp(\mathcal{O})$, the functions $x_1,\ldots,x_n$ form a system of coordinates on $N$. Moreover, on $N\cap \exp(\mathcal{O})$, the vector fields $\frac{\partial}{\partial x_i}$, for $i=n+1,\ldots,m$, are orthonormal. 
Thus, for every $p\in N\cap \exp(\mathcal{O})$, the metric $g$ is of the form
\begin{gather*}
 g_{ij}(p)=
  \begin{cases}
    h_{ij}&\mbox{ for $1\leq i,j\leq n$}\,, \\
    0&\mbox{ for $1\leq i \leq n$ and $n+1\leq j\leq m$}\,,\\
    \delta_{ij}&\mbox{ for $n+1\leq i,j\leq m$}\,.
  \end{cases}
\end{gather*}
The metric expressed in this form is used in the proof of the next proposition which shows that a small tubular neighbourhood around $N$ is quasi-isometric to a cylinder.

\begin{proposition}\label{prop:quasiiso}
Let $(M,g)$ be an $m-$dimensional Riemannian compact manifold and $N\subset M$ a compact submanifold of dimension $n$. Let $g_E$ be the $(m-n)-$dimensional Euclidean metric. For every $\varepsilon_0>0$, there exist $\delta>0$ such that, on $N^\delta:=\{p\in M~|~\text{dist}(p,N)<\delta\}$, $g$ is quasi-isometric to the product metric $\Tilde{g}=h\oplus g_E$ with constant $1+\varepsilon_0$. 
\end{proposition}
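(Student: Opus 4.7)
The plan is to use the Fermi coordinate description of $g$ recalled just above the statement, together with compactness of $N$, to compare $g$ with the product metric $\tilde g = h \oplus g_E$ pointwise on a sufficiently small tubular neighbourhood.

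First I would work locally around an arbitrary point $p_0 \in N$. In the Fermi chart $(x_1,\ldots,x_m)$ centred at $p_0$, the block decomposition of $g_{ij}$ stated before the proposition shows that at every point of $N \cap \exp(\mathcal O)$ the matrix of $g$ coincides with the matrix of the product metric $\tilde g = h \oplus g_E$: the tangential block is $h_{ij}$, the normal block is the identity, and the cross terms vanish. In particular, $g$ and $\tilde g$ agree as bilinear forms all along $N$ in these coordinates. Since both $g_{ij}$ and $\tilde g_{ij}$ are continuous functions of $(x_1,\ldots,x_m)$, for any $\varepsilon_0 > 0$ there exists $\delta_{p_0}>0$ such that on the slab $\{\sum_{i>n} x_i^2 < \delta_{p_0}^2\}$ within the chart the inequality
\[ \frac{1}{1+\varepsilon_0}\,\tilde g_p(v,v) \;\leq\; g_p(v,v) \;\leq\; (1+\varepsilon_0)\,\tilde g_p(v,v) \]
holds for every nonzero $v \in T_pM$. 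This is a standard uniform continuity argument: the ratio $g_p(v,v)/\tilde g_p(v,v)$, viewed as a continuous function on the unit sphere bundle over a compact neighbourhood, tends uniformly to $1$ as the point approaches $N$.

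Next, I would globalise using the compactness of $N$. Covering $N$ by finitely many Fermi charts $\exp(\mathcal O_k)$, I choose $\delta := \min_k \delta_{p_k} > 0$, also shrinking if necessary so that the tubular neighbourhood $N^\delta$ is contained in the union of these charts and so that the normal exponential map is a diffeomorphism from the $\delta$-disk bundle of the normal bundle onto $N^\delta$. The pointwise bound above then holds on all of $N^\delta$, which is exactly the quasi-isometry inequality with constant $1+\varepsilon_0$ in Definition~\ref{def:quasiiso}.

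The only genuine subtlety is checking that the metric $\tilde g$ is well-defined globally on $N^\delta$, independently of the Fermi chart used, so that the local comparisons patch together coherently. This is resolved by describing $\tilde g$ intrinsically as the pullback, under the normal exponential map, of the natural product metric on the normal bundle $\nu(N)$ in which horizontal $N$-directions carry $h$ and fibres carry the Euclidean metric inherited from $g$; different Fermi charts correspond to different local trivialisations of $\nu(N)$, all inducing the same $\tilde g$ on $N^\delta$. With $\tilde g$ so defined, the finite cover argument yields the uniform constant $1+\varepsilon_0$ on $N^\delta$, which is the claim.
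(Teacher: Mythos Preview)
Your argument is correct and is the standard route to this result: agreement of $g$ and $\tilde g$ along $N$ in Fermi coordinates, uniform continuity to get the pointwise ratio bound nearby, and a finite cover of $N$ to make the constant uniform. Your remark that $\tilde g$ is globally well-defined as the pullback by the normal exponential map of the natural connection metric on $\nu(N)$ is exactly what is needed to make the local comparisons coherent.

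Note, however, that the paper does not actually give its own proof of this proposition; it simply refers the reader to \cite[Proposition~2.2]{brisson}. So there is no in-paper argument to compare against. Your write-up is essentially what one expects that reference to contain, and nothing further is required.
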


See \cite[Proposition 2.2]{brisson} for a detailed proof. Throughout the article, the decomposition of the Euclidean metric in the polar metric $g_E=dr^2\oplus r^2 g_0$, with $g_0$ being the round metric on the sphere, is used.
The next proposition shows that if two metrics are quasi-isometric, then their Steklov eigenvalues are relatively close one to another, see \cite[Proposition 2.2]{CEG19}. The result is only stated for Steklov eigenvalues, but it is also true for Steklov--Neumann and Steklov--Dirichlet eigenvalues.

\begin{proposition}\label{prop:quasiisovp}
Let $M$ be a Riemannian manifold of dimension $m$. Let $g_1\,,g_2$ be two Riemannian metrics on $M$ which are quasi-isometric with constant $K$. The Steklov eigenvalues with respect to $g_1$ and to $g_2$ satisfy the following inequality
\[\frac{1}{K^{m+1/2}}\leq\frac{\sigma_\ell(M,g_1)}{\sigma_\ell(M,g_2)}\leq K^{m+1/2}\,.\]
\end{proposition}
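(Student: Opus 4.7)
The plan is to prove the inequality via the variational characterization of the Steklov eigenvalues by comparing the numerator, the denominator, and the volume forms appearing in the Rayleigh--Steklov quotient under the two metrics separately.

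First I would convert the quasi-isometry hypothesis into pointwise comparisons of the three geometric ingredients that enter the Rayleigh quotient. Writing the quasi-isometry as the matrix inequality $K^{-1} g_2 \leq g_1 \leq K g_2$ and inverting (using that $A \leq B$ implies $B^{-1}\leq A^{-1}$ for positive definite matrices) gives the reverse inequality on cotangent spaces,
\[
K^{-1}\, g_2^{-1}\leq g_1^{-1}\leq K\, g_2^{-1}\,,
\]
which when applied to the covector $df$ yields
\[
K^{-1}\,|\nabla_{g_2}f|_{g_2}^2 \leq |\nabla_{g_1}f|_{g_1}^2\leq K\,|\nabla_{g_2}f|_{g_2}^2\,.
\]
Taking determinants in $K^{-1}g_2\leq g_1\leq K g_2$ produces the volume comparison $K^{-m/2}\,dv_{g_2}\leq dv_{g_1}\leq K^{m/2}\,dv_{g_2}$, and restricting the metrics to $T(\partial\Omega)$ gives the boundary comparison $K^{-(m-1)/2}\,dS_{g_2}\leq dS_{g_1}\leq K^{(m-1)/2}\,dS_{g_2}$.

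Next I would assemble these three estimates in the Rayleigh--Steklov quotient. For any nonzero $f\in W^{1,2}(M)$,
\[
\int_M |\nabla_{g_1}f|^2\,dv_{g_1}\leq K\cdot K^{m/2}\int_M |\nabla_{g_2}f|^2\,dv_{g_2}\,,
\qquad
\int_{\partial M}f^2\,dS_{g_1}\geq K^{-(m-1)/2}\int_{\partial M}f^2\,dS_{g_2}\,,
\]
so
\[
R_{g_1}(f)\leq K^{1+m/2+(m-1)/2}\,R_{g_2}(f)= K^{m+1/2}\,R_{g_2}(f)\,.
\]
Plugging this into the min-max formula \eqref{eq:varchar1} (the same subspaces of $W^{1,2}(M)$ are admissible for both quotients, the Sobolev space being metric-independent modulo equivalent norms thanks to the quasi-isometry) yields
\[
\sigma_\ell(M,g_1)\leq K^{m+1/2}\,\sigma_\ell(M,g_2)\,.
\]
The reverse inequality follows by swapping the roles of $g_1$ and $g_2$, which gives the stated two-sided bound.

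No step is a genuine obstacle; the only point requiring care is the direction-reversal when passing from $g$ to $g^{-1}$, which is what makes the gradient comparison involve the factor $K$ rather than $K^{-1}$, and consequently produces the exponent $m+1/2=1+m/2+(m-1)/2$ rather than a smaller power. The statement for the Steklov--Neumann and Steklov--Dirichlet eigenvalues follows by the same argument, since the min-max principle for the mixed eigenvalues uses exactly the same Rayleigh quotient restricted to the appropriate subspace of $W^{1,2}(M)$, which is left invariant under the identity map that realises the quasi-isometry.
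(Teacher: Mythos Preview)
Your proof is correct and is precisely the standard argument: compare gradient norms via the inverse metric, volume forms via the determinant, and boundary measures via the induced metric, then feed these into the min--max characterization; the exponent $m+1/2=1+m/2+(m-1)/2$ falls out exactly as you compute. The paper does not give its own proof of this proposition at all---it simply refers to \cite[Proposition~2.2]{CEG19}---so your argument is a faithful reconstruction of what lies behind that citation.
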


\section{Upper bound for the first $b-1$ non-zero eigenvalues and rate of divergence for the truncated spectrum}\label{sec:upperbound}

In this section, Theorem \ref{thm:upperbound} and Theorem \ref{thm:convergence} are proved. The proofs use the Dirichlet--Neumann bracketing of the Steklov eigenvalues and a key lemma from \cite{brisson}.

Let $\Omega$ be a Riemannian manifold with boundary. Consider $A\subsetneq\Omega$  a neighborhood of the boundary $\partial\Omega$. Let $\Sigma:=\partial A\setminus\partial\Omega$ be the inner part of the boundary of $A$.
We introduce the following mixed Steklov--Neumann and Steklov--Dirichlet:
\begin{equation*}
    \begin{cases}
    \Delta u=0&\mbox{ in $A$}\,,\\
    \partial_n u=0 &\mbox{ on $\Sigma$}\,,\\
    \partial_n u=\sigma^{SN} u &\mbox{ on $\partial\Omega$}\,,
  \end{cases}
  \qquad\qquad\text{ and }\qquad\qquad
    \begin{cases}
    \Delta u=0&\mbox{ in $A$}\,,\\
    u=0 &\mbox{ on $\Sigma$}\,,\\
    \partial_n u=\sigma^{SD} u &\mbox{ on $\partial\Omega$}\,.
    \end{cases}
\end{equation*}
Their spectra are given by unbounded sequences of eigenvalues
$$0=\sigma_0^{SN}(A)\leq\sigma_1^{SN}(A)\leq\sigma_2^{SN}(A)\leq\cdots$$
and
$$0<\sigma_1^{SD}(A)\leq\sigma_2^{SD}(A)\leq\sigma_3^{SD}(A)\leq\cdots$$
where each eigenvalue is repeated according to its multiplicity. 
It follows from their variational characterizations that for all $\ell\geq0$, the following inequality holds:
\[\sigma_\ell^{SN}(A)\leq\sigma_\ell(\Omega)\leq\sigma_{\ell+1}^{SD}(A)\,.\]
This is a classical application of the Dirichlet--Neumann bracketing. See  \cite[Section 2]{colbois2018steklov} for details.

The following lemma is key in the proofs of Theorems \ref{thm:upperbound} and \ref{thm:convergence}. It is based on the results obtained by separation of variables in \cite[Lemma 3.3, Lemma 3.6, Theorem 4.2]{brisson}.
\begin{lemme}\label{lemme:asymp}
Let $N$ be a closed connected Riemannian manifold of dimension $0\leq n\leq m-2$ with $m\geq 2$. The distinct Steklov--Dirichlet and Steklov--Neumann eigenvalues on $\lbrack\varepsilon,\delta\rbrack\times N\times\s^{m-n-1}$ equipped with the metric $h \oplus dr^2\oplus r^2g_0$, with the Steklov condition at $\varepsilon$, satisfy the following asymptotics as $\varepsilon\to0$.
\begin{enumerate}
\item If $n=m-2$ and $n>0$, 
\begin{gather*}
    \sigma_{k,0}^{SD}\sim\frac{1}{\varepsilon|\log\varepsilon|}\mbox{ if $k\geq0$}\,,\\
    \sigma_{k,(p)}^{SD}\sim\frac{p}{\varepsilon}\mbox{ in all other cases}\,,
\end{gather*}
\begin{gather*}
    \sigma_{0,0}^{SN}=0\,,\\
    \sigma_{k,0}^{SN}\sim\frac{1}{\varepsilon\bigg(|\log(\sqrt{\lambda_k}\varepsilon)|-\frac{K_0'(\sqrt{\lambda_k}\delta)}{I_0'(\sqrt{\lambda_k}\delta)}\bigg)}\mbox{ if $k\neq0$}\,\\
    \sigma_{k,(p)}^{SN}\sim\frac{p}{\varepsilon}\mbox{ in all other cases}\,.
\end{gather*}
\item If $n\neq m-2$ and $n>0$,
\begin{gather*}
    \sigma_{k,(p)}^{SD}\sim\frac{m-n-2+p}{\varepsilon}\mbox{ for every $k\,,p\geq0$}\,,
\end{gather*}
\begin{gather*}
    \sigma_{0,0}^{SN}=0\,,\\
    \sigma_{k,(p)}^{SN}\sim\frac{m-n-2+p}{\varepsilon} \mbox{ in all other cases}\,.
\end{gather*}
\item If $n=0$, 
\begin{gather*}
\sigma_0^{SD}\sim\frac{1}{\varepsilon|\log\varepsilon|} \mbox{ if $m=2$}\,,\\
\sigma_k^{SD}\sim\frac{m+k-2}{\varepsilon}\mbox{ in all other cases}\,,\\
\sigma_0^{SN}=0\,,\\
\sigma_k^{SN}\sim\frac{m+k-2}{\varepsilon}\mbox{ for $k\geq1$}\,.
\end{gather*}
\end{enumerate}
The multiplicity of the eigenvalues $\sigma_{k,(p)}^{SD}$, $\sigma_{k,(p)}^{SN}$ is $m_p$ and the multiplicity of $\sigma_k^{SD}$ and $\sigma_k^{SN}$ is $m_k$.
\end{lemme}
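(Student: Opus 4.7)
The plan is to use separation of variables on the product metric $h \oplus dr^2 \oplus r^2 g_0$, which reduces the two mixed Steklov problems to a countable family of radial two-point boundary value problems on $[\varepsilon,\delta]$. Writing a harmonic function $u(y,r,\theta)=\phi(y) R(r) \psi(\theta)$, where $\phi$ is a Laplace eigenfunction on $N$ with eigenvalue $\lambda_k$ and $\psi$ is a spherical harmonic on $\s^{d}$ (with $d=m-n-1$) of eigenvalue $\mu_p=p(p+d-1)$, the equation $\Delta u=0$ collapses to
\[
R''(r)+\frac{d}{r}R'(r)-\left(\lambda_k+\frac{\mu_p}{r^2}\right)R(r)=0.
\]
The Steklov condition at $r=\varepsilon$ reads $-R'(\varepsilon)=\sigma R(\varepsilon)$ (the outward normal at the inner boundary is $-\partial_r$), while the condition at $r=\delta$ is $R(\delta)=0$ for the Steklov--Dirichlet problem and $R'(\delta)=0$ for the Steklov--Neumann problem.

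I would then solve the ODE in the two regimes. For $\lambda_k=0$ it is an Euler equation whose indicial polynomial $s^2+(d-1)s-\mu_p$ has discriminant $(d-1+2p)^2$, yielding characteristic exponents $p$ and $-(m-n-2+p)$; these coincide (producing a logarithmic branch) exactly when $d=1$ and $p=0$. For $\lambda_k>0$, the substitution $R(r)=r^{-(d-1)/2}f(\sqrt{\lambda_k}\,r)$ produces the modified Bessel equation of order $\nu=p+(d-1)/2$, so
\[
R(r)=r^{-(d-1)/2}\bigl(c_1 I_\nu(\sqrt{\lambda_k}\,r)+c_2 K_\nu(\sqrt{\lambda_k}\,r)\bigr).
\]
In either situation, the condition at $r=\delta$ fixes the ratio $c_1/c_2$, and the Steklov condition then forces $\sigma=-R'(\varepsilon)/R(\varepsilon)$.

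The asymptotics now follow from expanding this ratio as $\varepsilon\to 0$. Whenever $\nu>0$ one has $K_\nu(x)\sim\tfrac{1}{2}\Gamma(\nu)(2/x)^\nu$ and $I_\nu(x)=O(x^\nu)$, so the $K_\nu$ branch dominates $R(\varepsilon)$, and the ratio produces $\sigma\sim(m-n-2+p)/\varepsilon$ independently of $\delta$ and of $\lambda_k$; this covers all of cases 2 and 3 and the cases $p\geq1$ of case 1. When $n=m-2$ and $p=0$ the order is $\nu=0$ and $K_0(x)\sim-\log(x/2)-\gamma$, so $R(\varepsilon)$ is only logarithmically large while $R'(\varepsilon)\sim -c_2/\varepsilon$; the ratio then retains the boundary data at $\delta$ through $c_1/c_2=-K_0'(\sqrt{\lambda_k}\delta)/I_0'(\sqrt{\lambda_k}\delta)$, which is precisely the constant appearing in the Steklov--Neumann formula. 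For $\lambda_k=0$ in this exceptional case, a direct computation with $R(r)=c_1+c_2\log r$ yields the rate $1/(\varepsilon|\log\varepsilon|)$ for Steklov--Dirichlet and a vanishing eigenvalue (the constant function) for Steklov--Neumann.

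The multiplicities come from the fact that for each fixed $k$ the sphere eigenvalue $\mu_p$ already carries multiplicity $m_p$, while distinct values of $k$ correspond to different $N$-eigenfunctions and hence typically to distinct $\sigma$. The delicate step will be the $n=m-2$, $p=0$ stratum: the naive leading order $(m-n-2+p)/\varepsilon$ vanishes there, so one must push the expansion of the Bessel quotient one order further and carefully track the constant $K_0'(\sqrt{\lambda_k}\delta)/I_0'(\sqrt{\lambda_k}\delta)$ encoding the condition at $r=\delta$ in order to recover the stated logarithmic correction.
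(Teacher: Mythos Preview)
Your proposal is correct and follows exactly the approach the paper invokes: the paper does not give an independent proof of this lemma but simply states that it ``is based on the results obtained by separation of variables in \cite[Lemma 3.3, Lemma 3.6, Theorem 4.2]{brisson}'', and what you have written is precisely that separation-of-variables computation (radial ODE reducing to Euler or modified Bessel equations, then small-argument asymptotics of $I_\nu$ and $K_\nu$). Your identification of the exceptional stratum $n=m-2$, $p=0$ as the only place where the leading term vanishes and a logarithmic correction via $K_0$ is needed matches the paper's statement exactly.
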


\begin{proof}[Proof of Theorem \ref{thm:upperbound}]
Let $\varepsilon_0>0$. For each $1\leq j\leq b$, by Proposition \ref{prop:quasiiso}, there exists $\delta_j>0$ such that $g$ and $\Tilde{g}_j:=h_j\oplus dr^2\oplus r^2g_0$ are quasi-isometric of constant $K:=1+\varepsilon_0$ on $N_j^{\delta_j}$. Set $\delta:=\min\bigg\{\delta_j, \frac{\text{dist}_{ij}}{3}\bigg\}$, where $\text{dist}_{ij}=\text{dist}(N_j,N_i)$ for $i\neq j$. Then, $g$ and $\Tilde{g}_j$ are quasi-isometric of constant $1+\varepsilon_0$ on $N_j^\delta$. Let $\Tilde{g}$ be a Riemannian metric such that $\Tilde{g}\bigg|_{N_j^\delta}=\Tilde{g}_j$.

Let $\varepsilon<\delta$.
Consider the Steklov problem on $\Omega_\varepsilon:=M\backslash\bigg(\bigsqcup\limits_{j=1}^b N_j^\varepsilon\bigg)$ equipped with the metric $g$.
By the Dirichlet--Neumann bracketing with $A:=\bigsqcup\limits_{j=1}^b N_j^\delta\backslash N_j^\varepsilon$, we have
\[\sigma_\ell^{SN}(A,g)\leq\sigma_\ell(\Omega_\varepsilon,g)\leq\sigma_{\ell+1}^{SD}(A,g)\,.\]
By Proposition \ref{prop:quasiisovp}, for every $\ell\geq0$, we have
\begin{gather*}
\sigma_\ell^{SN}(A,g)\geq\frac{\sigma_\ell^{SN}(A,\Tilde{g})}{(1+\varepsilon_0)^{2m+1}}\,,\\
\sigma_{\ell+1}^{SD}(A,g)\leq (1+\varepsilon_0)^{2m+1}\sigma_{\ell+1}^{SD}(A,\Tilde{g})\,,
\end{gather*}
which leads to the inequality
\[\frac{\sigma_\ell^{SN}(A,\Tilde{g})}{(1+\varepsilon_0)^{2m+1}}\leq\sigma_\ell(\Omega_\varepsilon,g)\leq(1+\varepsilon_0)^{2m+1}\sigma_{\ell+1}^{SD}(A,\Tilde{g})\,.\]

Since $(A,\Tilde{g})$ is a union of product manifolds, the Steklov--Neumann and Steklov--Dirichlet spectra are the disjoint union of the spectra on each manifolds, that is
\begin{gather*}
\mathcal{S}^{SN}(A,\Tilde{g})=\{\sigma_\ell^{SN}(A,\Tilde{g})\}_{\ell\geq0}=\bigsqcup\limits_{j=1}^b \mathcal{S}^{SN}(N_j^\delta\backslash N_j^\varepsilon,\Tilde{g}_j)\,,\\
\mathcal{S}^{SD}(A,\Tilde{g})=\{\sigma_\ell^{SD}(A,\Tilde{g})\}_{\ell\geq0}=\bigsqcup\limits_{j=1}^b \mathcal{S}^{SD}(N_j^\delta\backslash N_j^\varepsilon,\Tilde{g}_j)\,.
\end{gather*}

Hence, for $0\leq\ell\leq b-1$, we have $\sigma_\ell^{SN}(A,\Tilde{g})=0$. For the Steklov--Dirichlet spectrum, by observing the asymptotics in \ref{lemme:asymp}, if $\varepsilon<1/e$, then the first $b-1$ Steklov--Dirichlet eigenvalues are bounded above by
\[\sigma_\ell^{SD}(A,\Tilde{g})\leq\frac{1}{\varepsilon|\log\varepsilon|}\,,\]
if there exists a $j$ such that $n_j=m-2$ or by
\[\sigma_\ell^{SD}(A,\Tilde{g})\leq\frac{m-n-2}{\varepsilon}\,,\]
otherwise, where $n=\min n_j$.
Since 
\[\frac{1}{\varepsilon|\log\varepsilon|}\leq\frac{1}{\varepsilon}\,,\]
we have that, for every $1\leq\ell\leq b-1$, the $\ell-$th Steklov eigenvalue is bounded above by
\[\sigma_\ell(\Omega_\varepsilon,g)\leq\frac{\max\{1,m-n-2\}(1+\varepsilon_0)^{2m+1}}{\varepsilon}\,.\]
Thus, by taking the limit superior of $\varepsilon\sigma_\ell(\Omega_\varepsilon,g)$ as $\varepsilon\to0$, we obtain that
\[\limsup\limits_{\varepsilon\to0}\varepsilon\sigma_\ell(\Omega_\varepsilon,g)\leq \max\{1,m-n-2\}(1+\varepsilon_0)^{2m+1}\,.\]
Taking the limit as $\varepsilon_0\to0$ in the previous inequality proves the theorem.
\end{proof}

\begin{proof}[Proof of Theorem \ref{thm:convergence}]
By copying the beginning of the last proof, for every $\ell\geq0$, we have
\[\frac{\sigma_\ell^{SN}(A,\Tilde{g})}{(1+\varepsilon_0)^{2m+1}}\leq\sigma_\ell(\Omega_\varepsilon,g)\leq(1+\varepsilon_0)^{2m+1}\sigma_{\ell+1}^{SD}(A,\Tilde{g})\,,\]
where
\begin{gather*}
\{\sigma_\ell^{SN}(A,\Tilde{g})\}_{\ell\geq0}=\bigsqcup\limits_{j=1}^b \mathcal{S}^{SN}(N_j^\delta\backslash N_j^\varepsilon,\Tilde{g}_j)\,,\\
\{\sigma_\ell^{SD}(A,\Tilde{g})\}_{\ell\geq0}=\bigsqcup\limits_{j=1}^b \mathcal{S}^{SD}(N_j^\delta\backslash N_j^\varepsilon,\Tilde{g}_j)\,.
\end{gather*}

Thus, for every $\ell\geq b$, there exists $1\leq j\leq b$ such that $\sigma_\ell^{SD}\in\mathcal{S}^{SN}(N_j^\delta\backslash N_j^\varepsilon,\Tilde{g}_j)$ and $\sigma_\ell^{SN}\in\mathcal{S}^{SD}(N_j^\delta\backslash N_j^\varepsilon,\Tilde{g}_j)$. 

If $n_j\neq0$, there exist indices $k\,,p$ such that $\sigma_\ell^{SD}(A,\Tilde{g})=\sigma_{k,p}^{SD}(N_j^\delta\backslash N_j^\varepsilon,\Tilde{g}_j)$. 
If $p=0$, set $q=0$ and if $p>0$, choose the unique $q>0$ such that $m_0+\cdots+m_{q-1}\leq p< m_0+\cdots+m_{q-1}+m_{q}$. By Lemma \ref{lemme:asymp}, for each $k\geq0$, we have
\[\lim\limits_{\varepsilon\to0}\varepsilon\sigma_{\ell}(\Omega_\varepsilon,g)\leq (1+\varepsilon_0)^{2m+1}(m-n_j-2+q)\,,\]
except where $k=p=0$. 
Since it is true for every $\varepsilon_0>0$, we take the limit as $\varepsilon_0\to0$ to obtain
\[\lim\limits_{\varepsilon\to0}\varepsilon\sigma_{\ell}(\Omega_\varepsilon,g)\leq m-n_j-2+q\,.\]
We also have $\sigma_\ell^{SN}(A,\Tilde{g})=\sigma_{k,p}^{SN}(N_j^\delta\backslash N_j^\varepsilon,\Tilde{g}_j)$. If $p=0$, set $q=0$ and if $p>0$, choose the unique $q>0$ such that $m_0+\cdots+m_{q-1}\leq p< m_0+\cdots+m_{q-1}+m_{q}$.
By Lemma \ref{lemme:asymp}, for each $k\geq0$, we have
\[\lim\limits_{\varepsilon\to0}\varepsilon\sigma_{\ell}(\Omega_\varepsilon,g)\geq\frac{m-n_j-2+q}{(1+\varepsilon_0)^{2m+1}}\,,\]
except when $k=p=0$.
Since it is true for every $\varepsilon_0>0$, we take the limit as $\varepsilon_0\to0$ to obtain
\[\lim\limits_{\varepsilon\to0}\varepsilon\sigma_{\ell}(\Omega_\varepsilon,g)\geq m-n_j-2+q\,,\]
and thus
\[\lim\limits_{\varepsilon\to0}\varepsilon\sigma_{\ell}(\Omega_\varepsilon,g)=m-n_j-2+q\,.\]
This proves Equation \eqref{eq:convergence}.

If $n_j=0$, there exists an index $k$ such that $\sigma_\ell^{SD}\in\mathcal{S}^{SN}(N_j^\delta\backslash N_j^\varepsilon,\Tilde{g}_j)$ and $\sigma_\ell^{SN}\in\mathcal{S}^{SD}(N_j^\delta\backslash N_j^\varepsilon,\Tilde{g}_j)$. 
For $k>0$, choose the unique $q>0$ such that $m_0+\cdots+m_{q-1}\leq k< m_0+\cdots+m_{q-1}+m_{q}$. By Lemma \ref{lemme:asymp}, we have
\[\lim\limits_{\varepsilon\to0}\varepsilon\sigma_{\ell}(\Omega_\varepsilon,g)\leq (1+\varepsilon_0)^{2m+1}(m-2+q)\,.\] 
Since it is true for every $\varepsilon_0>0$, we take the limit as $\varepsilon_0\to0$ to obtain
\[\lim\limits_{\varepsilon\to0}\varepsilon\sigma_{\ell}(\Omega_\varepsilon,g)\leq m-2+q\,.\]
We also have $\sigma_\ell^{SN}(A,\Tilde{g})=\sigma_{k}^{SN}(N_j^\delta\backslash N_j^\varepsilon,\Tilde{g}_j)$. For $k>0$, choose the unique $q>0$ such that $m_0+\cdots+m_{q-1}\leq k< m_0+\cdots+m_{q-1}+m_{q}$.
By Lemma \ref{lemme:asymp}, we have
\[\lim\limits_{\varepsilon\to0}\varepsilon\sigma_{\ell}(\Omega_\varepsilon,g)\geq\frac{m-2+q}{(1+\varepsilon_0)^{2m+1}}\,.\]
Since it is true for every $\varepsilon_0>0$, we take the limit as $\varepsilon_0\to0$ to obtain
\[\lim\limits_{\varepsilon\to0}\varepsilon\sigma_{\ell}(\Omega_\varepsilon,g)\geq m-2+q\,,\]
and thus
\[\lim\limits_{\varepsilon\to0}\varepsilon\sigma_{\ell}(\Omega_\varepsilon,g)= m-2+q\,.\]
This proves Equation \eqref{eq:convergencepoint}.

If $n_j=m-2$ and $p=0$, $k>0$, we can improve the limit. Indeed, we have
\[\lim\limits_{\varepsilon\to0}\varepsilon|\log\varepsilon|\sigma_{\ell}(\Omega_\varepsilon,g)\leq\lim\limits_{\varepsilon\to0}\varepsilon|\log\varepsilon|(1+\varepsilon_0)^{2m+1}\sigma_{j,k,0}^{SD}(N_j^\delta\backslash N_j^\varepsilon,\Tilde{g}_j))\sim (1+\varepsilon_0)^{2m+1}\,.\]
Similarly, we have the lower bound
\[\lim\limits_{\varepsilon\to0}\varepsilon|\log\varepsilon|\sigma_{\ell}(\Omega_\varepsilon,g)\geq\lim\limits_{\varepsilon\to0}\frac{\varepsilon|\log\varepsilon|}{(1+\varepsilon_0)^{2m+1}}\sigma_{j,k,0}^{SN}(N_j^\delta\backslash N_j^\varepsilon,\Tilde{g}_j)\sim\frac{1}{(1+\varepsilon_0)^{2m+1}}\,.\]
Since it is true for every $\varepsilon_0>0$, we take the limit as $\varepsilon_0\to0$ to obtain
\[\lim\limits_{\varepsilon\to0}\varepsilon|\log\varepsilon|\sigma_{\ell}(\Omega_\varepsilon,g)=1\,.\]
\end{proof}

\section{Large first Steklov eigenvalue for manifolds with multiple tubular excisions}\label{sec:largeegv}

In this section, we prove Theorem \ref{thm:largeegv}.
Recall that in Proposition \ref{prop:quasiiso}, it is proved that for $N\subset M$ a closed connected submanifold of dimension $0\leq n\leq m-2$, a tubular annulus around $N$ of inner radius $\varepsilon$ and outer radius $\delta$ is quasi-isometric to $N\times\B^{m-n}(\delta)\backslash\B^{m-n}(\varepsilon)$.
This quasi-isometry is used in the proof of Theorem \ref{thm:upperbound} to construct a metric $\Tilde{g}$ quasi-isometric to $g$ on $M$. The only information on $\Tilde{g}$ needed is its value on a neighbourhood of the submanifolds, meaning that the value of $\Tilde{g}$ outside of those neighbourhoods is not controlled. 
In the proof of Theorem \ref{thm:largeegv}, the value of the metric outside of those neighbourhoods needs to be controlled. We give the construction here.

Let $\varepsilon_0>0$. For each $j\in\{1,\,,\ldots\,,b\}$, by Proposition \ref{prop:quasiiso}, there exists $\delta_j>0$ such that $g$ and $\Tilde{g}_j:=h_j\oplus dr^2\oplus r^2g_0$, where $h_j$ is the metric on $N_j$, are quasi-isometric of constant $K:=1+\varepsilon_0$ on $N_j^{\delta_j}$. Set $\delta:=\min\bigg\{\frac{\delta_j}{3}, \frac{\text{dist}_{ij}}{6}\bigg\}$, where $\text{dist}_{ij}=\text{dist}(N_j,N_i)$ for $i\neq j$. Then, $g$ and $\Tilde{g}_j$ are quasi-isometric on $N_j^\delta$.
Let $\chi\in C^\infty(M)$ be a cut-off function such that
\begin{gather*}
0\leq\chi\leq1\,,\\
\chi\equiv 1 \text{ on } \bigsqcup\limits_{j=1}^b N_j^\delta \,,\\
\chi\equiv0 \text{ on } M\backslash \bigsqcup\limits_{j=1}^b N_j^{3\delta/2}\,.
\end{gather*}
Define $\overline{g}$ a new Riemannian metric on $M$ as
\begin{equation*}
\overline{g}:=\begin{cases}
(1-\chi)g+\chi\Tilde{g}_j\,,&\mbox{ on $N_j^{3\delta/2}$ for all $1\leq j\leq b$,}\\
g\,,&\mbox{ elsewhere.}
\end{cases}
\end{equation*}
Then, let $x\in M$. If $x\in N_j^{\delta}$ for a certain $j$, then we have for every $v\in T_x M\backslash\{0\}$
\[\frac{\overline{g}(v,v)}{g(v,v)}=\frac{\Tilde{g}_j(v,v)}{g(v,v)}\,,\]
which implies that
\[\frac{1}{1+\varepsilon_0}\leq\frac{\overline{g}(v,v)}{g(v,v)}\leq 1+\varepsilon_0\,,\]
since $g$ and $\Tilde{g}_j$ are quasi-isometric of constant $1+\varepsilon_0$.
If $x\in N_j^{3\delta/2}\backslash N_j^\delta$ for a certain $j$, then, since $3\delta/2\leq\delta_j/2$, it follows that $g$ and $\Tilde{g}_j$ are quasi-isometric of constant $1+\varepsilon_0$ on $N_j^{3\delta/2}\backslash N_j^\delta$. Thus, for every $v\in T_x M\backslash\{0\}$, we have
\[\frac{1}{1+\varepsilon_0}\leq 1-\chi(x)+\frac{\chi(x)}{1+\varepsilon_0}\leq\frac{\overline{g}(v,v)}{g(v,v)}\leq 1-\chi(x)+\chi(x)(1+\varepsilon_0)\leq 1+\varepsilon_0\,.\]
If $x\in M\backslash \bigsqcup\limits_{j=1}^b N_j^{3\delta/2}$, then we have $\overline{g}=g$.
We conclude that $g$ and $\overline{g}$ are quasi-isometric on $M$ with constant $1+\varepsilon_0$.

We start by giving a lower bound for the Dirichlet energy of a function defined on $T(\varepsilon,\delta):=N\times(\B^{m-n}(\delta)\backslash\B^{m-n}(\varepsilon))$ equipped with the metric
\[\Tilde{g}:=h\oplus dr^2\oplus r^2g_0\,.\]
\begin{lemme}\label{prop:dirichletenergy}
Let $\sigma_1^{SN}$ be the first non-zero Steklov--Neumann eigenvalue of $T(\varepsilon,\delta)$, with Steklov condition on the boundary component $N\times\s^{d}_\varepsilon$. For $f\in C^\infty(T(\varepsilon,\delta))$, its Dirichlet energy has the following lower bound
\begin{equation*}
\|\nabla f\|^2_{L^2(T(\varepsilon,\delta))}\geq \sigma_1^{SN}\|f-\overline{f}\|^2_{L^2(N\times\s^d_\varepsilon)}\,,
\end{equation*}
where $\overline{f}:=\frac{1}{|N||\s^d_\varepsilon|}\int\limits_{N\times\s^d_\varepsilon} f dS_{\Tilde{g}}$.
\end{lemme}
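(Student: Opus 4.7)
The plan is to recognize that this inequality is essentially the variational characterization of $\sigma_1^{SN}$ in disguise. Recall that the Steklov--Neumann problem on $T(\varepsilon,\delta)$ with Steklov condition on $N\times\s^d_\varepsilon$ has $0$ as its first eigenvalue (attained on constants), so the first non-zero eigenvalue admits the Rayleigh quotient description
\[\sigma_1^{SN} = \min\left\{\frac{\int_{T(\varepsilon,\delta)}|\nabla u|^2\,dv_{\Tilde{g}}}{\int_{N\times\s^d_\varepsilon}u^2\,dS_{\Tilde{g}}} : u\in W^{1,2}(T(\varepsilon,\delta)),\ u\not\equiv 0,\ \int_{N\times\s^d_\varepsilon}u\,dS_{\Tilde{g}}=0\right\},\]
since the next eigenfunction is $L^2$-orthogonal on the Steklov boundary to the kernel, which consists of the constant functions.

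Given an arbitrary $f\in C^\infty(T(\varepsilon,\delta))$, I would form the test function $u:=f-\overline{f}$, where $\overline{f}$ is the constant equal to the mean of $f$ on $N\times\s^d_\varepsilon$. Two quick observations make this test function admissible and effectively reduce the problem to a one-liner: first, $\int_{N\times\s^d_\varepsilon} u\,dS_{\Tilde{g}} = 0$ by the very definition of $\overline{f}$; second, since $\overline{f}$ is a constant on all of $T(\varepsilon,\delta)$, we have $\nabla u=\nabla f$ pointwise, so the Dirichlet energies of $f$ and $u$ coincide.

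Substituting $u$ into the variational characterization and rewriting then yields
\[\int_{T(\varepsilon,\delta)}|\nabla f|^2\,dv_{\Tilde{g}} = \int_{T(\varepsilon,\delta)}|\nabla u|^2\,dv_{\Tilde{g}} \geq \sigma_1^{SN}\int_{N\times\s^d_\varepsilon}u^2\,dS_{\Tilde{g}} = \sigma_1^{SN}\|f-\overline{f}\|_{L^2(N\times\s^d_\varepsilon)}^2,\]
which is precisely the claimed bound. There is no serious obstacle here; the only point worth verifying carefully is that the mean-zero constraint on the Steklov boundary is indeed the correct orthogonality condition for the min-max formula of $\sigma_1^{SN}$, and this is a standard consequence of the spectral theorem applied to the Dirichlet-to-Neumann operator associated with the mixed Steklov--Neumann problem.
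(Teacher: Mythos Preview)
Your proposal is correct and follows essentially the same approach as the paper: both subtract the boundary mean to obtain a function $F=f-\overline{f}$ with zero average on $N\times\s^d_\varepsilon$, observe that $\nabla F=\nabla f$, and then plug $F$ into the variational characterization of $\sigma_1^{SN}$ to conclude. The only additional remark you make, namely that the mean-zero constraint is the correct orthogonality condition for the min-max, is a welcome clarification but not a departure from the paper's argument.
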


\begin{proof}
The eigenvalue $\sigma_1^{SN}$ admits the following characterization
\[\sigma_1^{SN}=\min\bigg\{\frac{\|\nabla f\|^2_{L^2(T(\varepsilon,\delta))}}{\|f\|^2_{L^2(N\times\s^{d}_\varepsilon)}}~:~ \int\limits_{N\times\s^{d}_\varepsilon} f\,dS_{\Tilde{g}}=0\bigg\}\,.\]

Define $\overline{f}:=\frac{1}{|N||\s^{d}_\varepsilon|}\int\limits_{N\times\s^{d}_\varepsilon} f \,dS_{\Tilde{g}}$ and $F:=f-\overline{f}$. Thus, $F$ satisfies
\[\int\limits_{N\times\s^{d}_\varepsilon} F\,dS_{\Tilde{g}}=0\,.\]
Hence, we have that
\[\sigma_1^{SN}\leq \frac{\int\limits_{T(\varepsilon,\delta)}|\nabla F|^2\,dv_{\Tilde{g}}}{\int\limits_{N\times\s^{d}_\varepsilon}F^2\,dS_{\Tilde{g}}}=\frac{\int\limits_{T(\varepsilon,\delta)}|\nabla f|^2\,dv_{\Tilde{g}}}{\int\limits_{N\times\s^{d}_\varepsilon}F^2\,dS_{\Tilde{g}}}\,.\]
\end{proof}

We also need a Poincaré type inequality for Radon measure on a compact manifold $(\Omega,g)$ based on \cite[Lemma 3.4]{CEG19}. As in \cite{gkl}, we define the first variational eigenvalue of $\Omega$ associated to $\mu$ as
\[\lambda_1(\Omega,g,\mu)=\inf\limits_{F_2}\sup\limits_{f\in F\backslash\{0\}}\frac{\int\limits_\Omega |\nabla f|^2\,dv_g}{\int\limits_\Omega f^2\,d\mu}\,,\]
where the infimum is taken over all $2-$dimensional subspaces $F_2\in C^\infty(\Omega)$.
\begin{lemme}\label{lemme:poincare}
Let $(\Omega,g)$ be a compact connected Riemannian manifold and let $\mu$ be a Radon measure on $\Omega$ not supported on a single point and such that $\mu(\Omega)<\infty$. Let $\lambda_1(\Omega,g,\mu)$ be the first variational eigenvalue of $\Omega$ associated to $\mu$. Let $V_1$ and $V_2$ be two disjoint measurable subsets of $\Omega$ such that $\mu(V_1)\,,\mu(V_2)\neq0$. Then, every function $f\in C^\infty(\Omega)$ satisfies
\[\int\limits_\Omega |\nabla f|^2\,d\mu\geq \frac{\lambda_1(\Omega,g,\mu)}{2}\min(\mu(V_1),\mu(V_2))\bigg(\fint\limits_{V_1}f\,d\mu-\fint\limits_{V_2}f\,d\mu\bigg)^2\,,\]
where $\fint\limits_{V_i}f\,dv_g=\frac{1}{\mu(V_i)}\int\limits_{V_i}f\,d\mu$.
\end{lemme}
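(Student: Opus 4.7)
The plan is to apply the variational characterization of $\lambda_1(\Omega,g,\mu)$ to a mean-zero shift of $f$, and then localize the resulting inequality to the two sets $V_1$ and $V_2$ via Jensen's inequality.

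First I would set $c:=\fint_\Omega f\,d\mu$ and $\tilde{f}:=f-c$, so that $\int_\Omega \tilde f\,d\mu=0$ and $|\nabla \tilde f|=|\nabla f|$. The given definition of $\lambda_1(\Omega,g,\mu)$ via $2$-dimensional subspaces is equivalent to the standard mean-zero Rayleigh formulation
\[
\lambda_1(\Omega,g,\mu)=\inf\left\{\frac{\int_\Omega |\nabla u|^2\,dv_g}{\int_\Omega u^2\,d\mu}~:~u\in C^\infty(\Omega)\,,~\int_\Omega u\,d\mu=0\right\},
\]
which is seen by testing on $F_2=\mathrm{span}(1,u)$, a subspace on which the Rayleigh quotient is maximized at $u$ itself because the cross term $\int u\,d\mu$ vanishes. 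Applying this characterization to $\tilde f$ yields
\[
\int_\Omega |\nabla f|^2\,dv_g \;\geq\; \lambda_1(\Omega,g,\mu)\int_\Omega \tilde f^2\,d\mu.
\]

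Next, since $V_1$ and $V_2$ are disjoint, I would bound $\int_\Omega \tilde f^2\,d\mu\geq \int_{V_1}\tilde f^2\,d\mu+\int_{V_2}\tilde f^2\,d\mu$. Jensen's inequality applied to each $V_i$ gives, with $a_i:=\fint_{V_i} f\,d\mu$,
\[
\int_{V_i}\tilde f^2\,d\mu\;\geq\;\mu(V_i)\left(\fint_{V_i}\tilde f\,d\mu\right)^2=\mu(V_i)(a_i-c)^2.
\]

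Finally, I would observe that, regardless of the particular value of $c$, the quadratic $c'\mapsto \mu(V_1)(a_1-c')^2+\mu(V_2)(a_2-c')^2$ is minimized at the $\mu$-weighted mean $c^\ast=(\mu(V_1)a_1+\mu(V_2)a_2)/(\mu(V_1)+\mu(V_2))$ where it equals $\frac{\mu(V_1)\mu(V_2)}{\mu(V_1)+\mu(V_2)}(a_1-a_2)^2$. Combined with the elementary inequality $\frac{xy}{x+y}\geq \tfrac{1}{2}\min(x,y)$, this produces exactly the claimed bound. No step presents a serious obstacle; the only point meriting an explicit justification is the equivalence between the $2$-dimensional subspace definition of $\lambda_1$ and its mean-zero Rayleigh formulation, which is a standard consequence of Courant--Fischer.
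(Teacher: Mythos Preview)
Your argument is correct and self-contained. The paper does not give its own proof of this lemma; it simply refers the reader to \cite[Lemma 3.4]{CEG19}. Your route---mean-zero shift, the variational characterization of $\lambda_1(\Omega,g,\mu)$, Jensen on each $V_i$, and the elementary quadratic minimization yielding $\frac{\mu(V_1)\mu(V_2)}{\mu(V_1)+\mu(V_2)}\geq\tfrac12\min(\mu(V_1),\mu(V_2))$---is exactly the standard one and matches what one finds in that reference.

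One small remark: the statement as printed in the paper has $\int_\Omega|\nabla f|^2\,d\mu$ on the left-hand side, whereas you (correctly) wrote $\int_\Omega|\nabla f|^2\,dv_g$, consistent with the definition of $\lambda_1(\Omega,g,\mu)$ given just before the lemma. In the paper's application the measure $\mu$ is itself a restriction of $dv_{\overline g}$, so the distinction is moot there, but your version is the right general formulation.
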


We refer to \cite[Lemma 3.4]{CEG19} for a proof of this result.


This lemma is used in the proof of Theorem \ref{thm:largeegv} with the measure $\mu_\varepsilon=dv_{\bar{g}}^\varepsilon$, the measure volume associated to $\bar{g}$ restricted to $\Omega_\varepsilon$. In that case, $\lambda_1(\varepsilon)=\lambda_1(\Omega_\varepsilon,\bar{g},\mu_\varepsilon)$ is the first non-zero Neumann eigenvalue of $\Omega_\varepsilon$ (see \cite[Example 4.2]{gkl}).
If $\mu=dv_{\bar{g}}$ denotes the measure volume associated to $\bar{g}$ on $M$, then the following result shows that $\lambda_1(\varepsilon)$ converges to the first Laplace eigenvalue $\lambda_1(M,\bar{g},\mu)=\lambda_1$ of $M$ as $\varepsilon\to0$.
\begin{proposition}\label{prop:neumannev}
Let $(M,g)$ be a compact Riemannian manifold of dimension $m\geq 2$. Let $N_1\,,\ldots\,,N_b\subset M$ be $b\geq1$ closed disjoint connected submanifolds of dimension $0\leq n_j\leq m-2$. For $\varepsilon>0$, consider $\Omega_\varepsilon:=M\backslash\bigsqcup\limits_{j=1}^b N_j^\varepsilon$. Let $\lambda_1(\varepsilon)$ be the first non-zero Neumann eigenvalue of $\Omega_\varepsilon$ and $\lambda_1$ be the first non-zero Laplace eigenvalue of $M$. Then, we have
\[\lim\limits_{\varepsilon\to0}\lambda_1(\varepsilon)=\lambda_1\,.\]
\end{proposition}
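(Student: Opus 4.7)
The plan is to prove the two one-sided inequalities $\limsup_{\varepsilon\to0}\lambda_1(\varepsilon)\leq\lambda_1$ and $\liminf_{\varepsilon\to0}\lambda_1(\varepsilon)\geq\lambda_1$, both via the variational characterization of the first non-zero Neumann eigenvalue as an infimum of Rayleigh quotients over $H^1$ functions with zero mean.

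\textbf{Upper bound.} I would take an eigenfunction $u$ associated with $\lambda_1$ on $(M,\bar g)$, normalized by $\int_M u\,dv_{\bar g}=0$ and $\|u\|_{L^2(M)}=1$, and use
\[v_\varepsilon:=u-c_\varepsilon,\qquad c_\varepsilon:=\frac{1}{|\Omega_\varepsilon|}\int_{\Omega_\varepsilon}u\,dv_{\bar g}\]
as a test function on $\Omega_\varepsilon$ (it has zero mean there by construction). Because $\sum_j|N_j^\varepsilon|_{\bar g}\to 0$ as $\varepsilon\to 0$ and $\int_M u=0$, one has $c_\varepsilon\to 0$, $\|v_\varepsilon\|_{L^2(\Omega_\varepsilon)}^2\to 1$, and $\|\nabla v_\varepsilon\|_{L^2(\Omega_\varepsilon)}^2=\|\nabla u\|_{L^2(\Omega_\varepsilon)}^2\to\lambda_1$, which yields the upper bound.

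\textbf{Lower bound.} I would take an $L^2$-normalized first Neumann eigenfunction $u_\varepsilon$ on $\Omega_\varepsilon$, so that $\|\nabla u_\varepsilon\|_{L^2(\Omega_\varepsilon)}^2=\lambda_1(\varepsilon)$ is bounded by the upper bound just established. The plan is to build an extension $\tilde u_\varepsilon\in H^1(M)$ with $\tilde u_\varepsilon|_{\Omega_\varepsilon}=u_\varepsilon$, uniformly bounded $H^1$ norm on $M$, and Dirichlet energy on each excised tube $N_j^\varepsilon$ tending to $0$. Then Rellich compactness gives a subsequence converging weakly in $H^1(M)$ and strongly in $L^2(M)$ to a limit $u_\star\in H^1(M)$; combined with $|M\setminus\Omega_\varepsilon|\to 0$, this forces $\|u_\star\|_{L^2(M)}=1$ and $\int_M u_\star\,dv_{\bar g}=0$. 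Hence $u_\star$ is admissible for $\lambda_1$, and weak lower semicontinuity of the Dirichlet energy closes the argument:
\[\lambda_1\leq\|\nabla u_\star\|_{L^2(M)}^2\leq\liminf_{\varepsilon\to0}\|\nabla\tilde u_\varepsilon\|_{L^2(M)}^2=\liminf_{\varepsilon\to0}\lambda_1(\varepsilon).\]

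The main obstacle is the extension. Working in Fermi coordinates around $N_j$ where by construction $\bar g=h_j\oplus dr^2\oplus r^2g_0$, I would extend $u_\varepsilon$ on $N_j^\varepsilon\simeq N_j\times\B^{m-n_j}(\varepsilon)$ by a linear radial interpolation between the boundary trace $u_\varepsilon(\cdot,\varepsilon,\cdot)$ and the boundary mean $\bar u_\varepsilon^j:=\fint_{\partial N_j^\varepsilon}u_\varepsilon\,dS$ (and simply by $\bar u_\varepsilon^j$ when $N_j$ is a point). Integrating $|\nabla\tilde u_\varepsilon|^2$ against the product volume form $r^d\,dr\,d\omega\,dv_{h_j}$ reduces the Dirichlet energy on $N_j^\varepsilon$ to bounding the tangential gradient $\int_{\partial N_j^\varepsilon}|\nabla_{\mathrm{tan}}u_\varepsilon|^2\,dS$ and the boundary deviation $\int_{\partial N_j^\varepsilon}(u_\varepsilon-\bar u_\varepsilon^j)^2\,dS$. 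These can be handled through elliptic regularity for Neumann eigenfunctions ($\|u_\varepsilon\|_{H^2(K)}\leq C(1+\lambda_1(\varepsilon))\|u_\varepsilon\|_{L^2}$ on a fixed annular collar $K$), a trace inequality on $N_j^{2\delta}\setminus N_j^\varepsilon$, and the Poincar\'e--Wirtinger inequality on $\partial N_j^\varepsilon$. The codimension-two case $d=m-n_j-1=1$ is the most delicate, since $\int_0^\varepsilon r^{-1}\,dr$ diverges and the naive interpolation becomes borderline; a mode-by-mode construction using the spherical-harmonic decomposition of $u_\varepsilon|_{\partial N_j^\varepsilon}$, with radial profiles adapted to each harmonic, is likely needed there.
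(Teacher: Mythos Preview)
The paper gives no proof of this proposition: it observes that the statement is a direct generalization of \cite[Theorem~2]{anne1987} to several submanifolds and refers the reader there. Your two-sided variational strategy --- restrict an $M$-eigenfunction for the upper bound, extend $\Omega_\varepsilon$-eigenfunctions to $M$ and pass to a weak $H^1$ limit for the lower bound --- is exactly the architecture of Anne's argument, so you are reconstructing the cited proof rather than departing from the paper.

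Your upper bound is correct as written. For the lower bound, the compactness scheme is right and your identification of the extension as the crux is accurate, but the proposed implementation has a gap. The uniform estimate $\|u_\varepsilon\|_{H^2(K)}\leq C(1+\lambda_1(\varepsilon))\|u_\varepsilon\|_{L^2}$ is not available on a collar $K$ reaching $\partial N_j^\varepsilon$: the second fundamental form of that boundary has size $\sim 1/\varepsilon$, so the constant in elliptic boundary regularity degenerates as $\varepsilon\to 0$; and if $K$ is genuinely fixed away from the boundary, it cannot be used to trace onto $\partial N_j^\varepsilon$. The extensions in the literature (Anne, Rauch--Taylor, Chavel--Feldman) avoid any $H^2$ input and work purely at the $H^1$ level, bounding the added Dirichlet energy on $N_j^\varepsilon$ by the energy already present in the shrinking annulus $N_j^{2\varepsilon}\setminus N_j^\varepsilon$. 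For codimension at least $3$ a radial reflection $r\mapsto 2\varepsilon - r$ already does this; codimension $2$ is indeed the delicate case, as you anticipated, and there one does need the mode-by-mode treatment you allude to. Once such an $H^1$-only extension is in place, the rest of your argument (Rellich, weak lower semicontinuity) goes through unchanged.
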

This proposition is a generalization to multiple tubular excisions of \cite[Theorem 2]{anne1987} which states that, for a compact Riemannian manifold $M$ and a submanifolofd $N$ of codimension $\geq2$, the Neumann eigenvalues of $M\backslash N^\varepsilon$ converge to the Laplace eigenvalues of $M$ as $\varepsilon\to 0$. The proof uses the same argument, thus we refer the readers to \cite{anne1987} and references therein for a proof.

We are now ready to prove Theorem \ref{thm:largeegv}.
\begin{proof}[Proof of Theorem \ref{thm:largeegv}]
Let $\varepsilon_0>0$. As seen at the beginning of this section, $g$ and $\overline{g}$ are quasi-isometric on $M$ with constant $1+\varepsilon_0$.

Let $\varepsilon<\min\{\delta,1\}$ be such that $\varepsilon^{\alpha}<\delta$ with $\alpha<1$ to be chosen accordingly later in the proof.
Consider the Steklov problem on $\Omega_\varepsilon:=M\backslash\bigg(\bigsqcup\limits_{j=1}^b N_j^\varepsilon\bigg)$ equipped with the metric $g$.
By quasi-isometry between $\overline{g}$ and $g$, it follows by Proposition \ref{prop:quasiisovp} that
\[\sigma_1(\Omega_\varepsilon,g)\geq \frac{1}{(1+\varepsilon_0)^{m+1/2}}\sigma_1(\Omega_\varepsilon,\overline{g})\,.\]
Recall $d_j=m-n_j-1$ where $n_j$ is the dimension of $N_j$.
Let $f:\Omega_\varepsilon\to\R$ be a smooth function such that
\begin{gather}
    0=\int\limits_{\partial\Omega_\varepsilon}f\,dS_{\overline{g}}=\sum\limits_{j=1}^b \int\limits_{N_j\times\s^{d_j}_\varepsilon}f\,dS_{\Tilde{g}_j}\,,\label{eq:condition1}\\
    \|f\|_{L^2(\partial\Omega_\varepsilon,\overline{g})}=1\label{eq:condition2}\,.
\end{gather}
Let $T_j(\varepsilon,\delta):=N_j\times\lbrack\varepsilon,\delta\rbrack\times\s^{d_j}$, the Rayleigh--Steklov quotient of $f$ in the metric $\overline{g}$ is bounded below by
\[R(f,\overline{g})=\int\limits_{\Omega_\varepsilon}|\nabla f|^2\,dv_{\overline{g}}\geq \sum\limits_{j=1}^b\int\limits_{T_j(\varepsilon,\delta)}|\nabla f|^2\,dv_{\Tilde{g}_j}\,.\]
By applying Lemma \ref{prop:dirichletenergy} to each $N_j$, we have
\[R(f,\overline{g})\geq \min\limits_j \sigma_1^{SN}(T_j(\varepsilon,\delta))\sum\limits_{j=1}^b \bigg\|f-\fint\limits_{N_j\times\s_\varepsilon^{d_j}}f\bigg\|^2_{L^2(N_j\times\s^{d_j}_\varepsilon)}\,.\]
By the parallelogram law and by condition \eqref{eq:condition2}, it follows that
\[R(f,\overline{g})\geq\min\limits_j \sigma_1^{SN}(T_j(\varepsilon,\delta))\bigg(\frac{1}{2}-\sum\limits_{j=1}^b|N_j||\s_\varepsilon^{d_j}|\bigg(\fint\limits_{N_j\times\s_\varepsilon^{d_j}}f\,dS_{\Tilde{g}}\bigg)^2\bigg)\,.\]

We now consider two cases:
\[\sum\limits_{j=1}^b |N_j||\s^{d_j}_\varepsilon|\bigg(\fint\limits_{N_j\times\s^{d_j}_\varepsilon}f\,dS_{\Tilde{g}_j}\bigg)^2\leq\frac{1}{4}\,,\]
and
\[\sum\limits_{j=1}^b |N_j||\s^{d_j}_\varepsilon|\bigg(\fint\limits_{N_j\times\s^{d_j}_\varepsilon}f\,dS_{\Tilde{g}_j}\bigg)^2\geq\frac{1}{4}\,.\]
If $\sum\limits_{j=1}^b |N_j||\s^{d_j}_\varepsilon|\bigg(\fint\limits_{N_j\times\s^{d_j}_\varepsilon}f\,dS_{\Tilde{g}_j}\bigg)^2\leq\frac{1}{4}$, then the Rayleigh--Steklov quotient is bounded below by
\begin{gather*}
R(f,\overline{g})\geq \frac{\min\limits_j \sigma_1^{SN}(T_j(\varepsilon,\delta))}{4}\,.
\end{gather*}
By Lemma \ref{lemme:asymp} part $(2)$, the first positive Steklov--Neumann eigenvalue of $T_j(\varepsilon,\delta)$, as $\varepsilon\to0$, behaves as 
$$
\sigma_1^{SN}(T_j(\varepsilon,\delta))\sim\begin{cases}
\frac{m-n_j-1}{\varepsilon}\,,& \mbox{ if $n_j\neq m-2>0$,}\\
\frac{1}{\varepsilon}\,,&\mbox{ if $n_j=m-2>0$,}\\
\frac{m-1}{\varepsilon}\,,&\mbox{ if $n_j=0$.}
\end{cases}
$$
Since $m-1\geq m-2$, it follows that there exists a $\varepsilon_1>0$ such that, for all $\varepsilon<\min\{\delta,1,\varepsilon_1\}$, we have 
$$
\min\limits_j \sigma_1^{SN}(T_j(\varepsilon,\delta))\geq\frac{\max\{\min_j m-n_j-2,1\}}{\varepsilon}\,.
$$
Since $\varepsilon\leq\varepsilon^\alpha$, because $\alpha<1$, we have that
\[\frac{\max\{\min_j m-n_j-2,1\}}{\varepsilon}\geq \frac{\max\{\min_j m-n_j-2,1\}}{\varepsilon^{\alpha}}\,,\]
and thus
\[\min\limits_j \sigma_1^{SN}(T_j(\varepsilon,\delta))\geq \frac{\max\{\min_j m-n_j-2,1\}}{\varepsilon^{\alpha}}\,.\]
Then, it follows that
\begin{equation}\label{eq:case1}
R(f,\overline{g})\geq\frac{\max\{\min_j m-n_j-2,1\}}{4\varepsilon^{\alpha}}\,.
\end{equation}

If $\sum\limits_{j=1}^b |N_j||\s^{d_j}_\varepsilon|\bigg(\fint\limits_{N_j\times\s^{d_j}_\varepsilon}f\,dS_{\Tilde{g}_j}\bigg)^2\geq\frac{1}{4}$, without loss of generality, suppose that 
$$|N_1||\s^{d_1}_\varepsilon|\bigg(\fint\limits_{N_1\times\s^{d_1}_\varepsilon}f\,dS_{\Tilde{g}_1}\bigg)^2\geq\frac{1}{4b}\,,$$
which implies that
\[\fint\limits_{N_1\times\s^{d_1}_\varepsilon}f\,dS_{\Tilde{g}_1}\geq\frac{1}{\sqrt{4b|N_1||\s^{d_1}_\varepsilon|}}>0\,.\]
By Equation \eqref{eq:condition1}, we have that
\[\sum\limits_{j=2}^b|N_j||\s_\varepsilon^{d_j}|\fint\limits_{N_j\times\s^{d_j}_\varepsilon}f\,dS_{\Tilde{g}_j}=-|N_1||\s_\varepsilon^{d_1}|\fint\limits_{N_1\times\s^{d_1}_\varepsilon}f\,dS_{\Tilde{g}_1}\leq-\sqrt{\frac{|N_1||\s_\varepsilon^{d_1}|}{4b}}\,.\]
Without loss of generality, we can suppose that this implies
\[|N_2||\s_\varepsilon^{d_2}|\fint\limits_{N_2\times\s^{d_2}_\varepsilon}f\,dS_{\Tilde{g}_2}\leq\frac{-1}{b-1}\sqrt{\frac{|N_1|||\s_\varepsilon^{d_1}|}{4b}}\,.\]

Since $d_j\leq m-1$, it follows that $\varepsilon^{d_j}\geq\varepsilon^{m-1}$. Thus, we have
\[\varepsilon^{m-1}|N_2|\omega_{d_2}\fint\limits_{N_2\times\s^{d_2}_\varepsilon}f\,dS_{\Tilde{g}_2}\leq\frac{-1}{b-1}\sqrt{\frac{|N_1||\s_\varepsilon^{d_1}|}{4b}}\leq\frac{-1}{b-1}\sqrt{\frac{\varepsilon^{(m-1)}|N_1|\omega_{d_1}}{4b}}<0\,.\]

We now use the Fourier decomposition of the function $f$ in each $T_j(\varepsilon,\delta)$. Let $\{\phi_i^j\}_{i\geq0}$ be an orthonormal basis of $L^2(N_j\times\s^{d_j})$. Then, the Fourier decomposition of $f$ on $T_j(\varepsilon,\delta)$ is given by
\[f(r,p)=\sum\limits_{i\geq0}a_i^j(r)\phi_i^j(p)\,.\]
Thus, for each $j$, we have
\[\int\limits_{N_j\times\s^{d_j}_\varepsilon}f\,dS_{\Tilde{g}_j}=\int\limits_{N_j\times\s^{d_j}}\sum\limits_{i\geq0}a_0^j(\varepsilon)\phi_i^j\varepsilon^{d_j}\,dv_{h_j\oplus g_0}=\varepsilon^{d_j}a_0^j(\varepsilon)\,.\]
The conditions on the mean value of $f$ in $N_1\times\s_\varepsilon^{d_1}$ and on $N_2\times\s_\varepsilon^{d_2}$ become
\begin{gather*}
a_0^1(\varepsilon)\geq\sqrt{\frac{|N_1|\omega_{d_1}}{4b\varepsilon^{d_1}}}>0\,,\\
a_0^2(\varepsilon)\leq-\frac{1}{(b-1)}\sqrt{\frac{|N_1|\omega_{d_1}}{4b\varepsilon^{m-1}}}<0\,.
\end{gather*} 

If there exists $t_0\in\lbrack\varepsilon,\varepsilon^{\alpha}\rbrack$ such that $a_0^1(t_0)\leq\frac{a_0^1(\varepsilon)}{2}$, then we have 
\[R(f,\overline{g})\geq\int\limits_{T_1(\varepsilon,t_0)}|\nabla f|^2\,dv_{\Tilde{g}_1}\geq\int\limits_{N_1\times\s^{d_1}}\int\limits_\varepsilon^{t_0}(a_0^1(r)')^2r^{d_1}\,dr\,dv_{h_1\oplus g_0}\,.\]
By the Cauchy-Schwarz inequality, it follows that
\[\int\limits_{N_1\times\s^{d_1}}\int\limits_\varepsilon^{t_0}(a_0^1(r)')^2r^{d_1}\,dr\,dv_{h_1\oplus g_0}\geq\frac{\varepsilon^{d_1}}{|N_1|\omega_{d_1}(t_0-\varepsilon)}\bigg(\int\limits_{N_1\times\s^{d_1}}a_0^1(r)'\,dr\,dv_{h_1\oplus g_0}\bigg)^2\,.\]
By the Fundamental theorem of calculus, we have
\[\frac{\varepsilon^{d_1}}{|N_1|\omega_{d_1}(t_0-\varepsilon)}\bigg(\int\limits_{N_1\times\s^{d_1}}a_0^1(r)'\,dr\,dv_{h_1\oplus g_0}\bigg)^2=\frac{\varepsilon^{d_1}}{(t_0-\varepsilon)}\bigg(\int\limits_{N_1\times\s^{d_1}}a_0^1(t_0)-a_0^1(\varepsilon)\,dv_{h_1\oplus g_0}\bigg)^2\geq\frac{\min_j |N_j|^2\omega_{d_j}^2}{16b\varepsilon^{\alpha}}\,.\]
Thus, in that case, the Rayleigh--Steklov quotient is bounded below by
\begin{equation}\label{eq:case2}
R(f,\overline{g})\geq\frac{\min_j |N_j|^2\omega_{d_j}^2}{16b\varepsilon^{\alpha}}\,.
\end{equation}
Similarly, if there exists a $s_0\in\lbrack\varepsilon,\varepsilon^{\alpha}\rbrack$ such that $a_0^2(s_0)\geq\frac{a_0^2(\varepsilon)}{2}$, then 
\begin{equation}\label{eq:case3}
R(f,\overline{g})\geq\frac{\min_j |N_j|^2\omega_{d_j}^2}{16b(b-1)^2\varepsilon^{\alpha}}\,.
\end{equation} 

If for all $t\in\lbrack\varepsilon,\varepsilon^{\alpha}\rbrack$, we have $a_0^1(t)\geq\frac{a_0^1(\varepsilon)}{2}$ and $a_0^2(t)\leq\frac{a_0^2(\varepsilon)}{2}$, then we have
\[\fint\limits_{T_1(\varepsilon,\varepsilon^{\alpha})}f\,dv_{\Tilde{g}_1}=\frac{1}{|T_1(\varepsilon,\varepsilon^{\alpha})|}\int\limits_{N_1\times\s^{d_1}}\int\limits_\varepsilon^{\varepsilon^{\alpha}}a_0^1(r)\phi_0^1r^{d_1}\,dr\,dv_{h_1\oplus g_0}\geq\frac{1}{2}\sqrt{\frac{1}{4b|N_1|\omega_{d_1}\varepsilon^{d_1}}}>0\,,\]
and
\[\fint\limits_{T_2(\varepsilon,\varepsilon^{\alpha})}f\,dv_{\Tilde{g}_2}=\frac{1}{|T_2(\varepsilon,\varepsilon^{\alpha})|}\int\limits_{N_2\times\s^{d_2}}\int\limits_\varepsilon^{\varepsilon^{\alpha}}a_0^2(r)\phi_0^2r^{d_2}\,dr\,dv_{h_2\oplus g_0}\leq\frac{-1}{2|N_2|\omega_{d_2}(b-1)}\sqrt{\frac{|N_1|\omega_{d_1}}{4b\varepsilon^{m-1}}}<0\,,\]
which implies that
\begin{gather*}
\bigg(\fint\limits_{T_1(\varepsilon,\varepsilon^{\alpha})}f\,dS_{\Tilde{g}_1}-\fint\limits_{T_2(\varepsilon,\varepsilon^{\alpha})}f\,dS_{\Tilde{g}_2}\bigg)^2\geq \frac{\min_j|N_j|\omega_{d_j}}{16b\max_j|N_j|^2\omega_{d_j}^2}\bigg(\frac{1}{\varepsilon^{d_1/2}}+\frac{1}{(b-1)\varepsilon^{(m-1)/2}}\bigg)^2\\
\geq \frac{b\min_j|N_j|\omega_{d_j}}{16\varepsilon(b-1)^2\max_j|N_j|^2\omega_{d_j}^2}\,,
\end{gather*}
since $\varepsilon^{d_j}\geq\varepsilon$ for all $j$.

Let $\lambda_1(\varepsilon)$ be the first non-zero Neumann eigenvalue of $\Omega_\varepsilon$ associated to the metric $\overline{g}$ and $\lambda_1$ be the first non-zero eigenvalue of $M$ associated to the metric $\overline{g}$. By Proposition \ref{prop:neumannev}, we have that $\lambda_1(\varepsilon)\to\lambda_1$ as $\varepsilon\to 0$. Thus, there exists $\varepsilon_2>0$ such that, for all $\varepsilon<\min\{\delta,1,\varepsilon_1,\varepsilon_2\}$, we have $\lambda_1(\varepsilon)\geq\frac{\lambda_1}{2}$.
By Lemma \ref{lemme:poincare} with the measure $\mu:=dv_{\overline{g}}$, $V_1=T_1(\varepsilon,\varepsilon^{\alpha})$ and $V_2=T_2(\varepsilon,\varepsilon^{\alpha})$, we have
\begin{gather*}
    R(f,\overline{g})\geq\frac{\lambda_1(\varepsilon)\min_j|T_j(\varepsilon,\varepsilon^{\alpha})|}{2}\bigg(\fint\limits_{T_1(\varepsilon,\varepsilon^{\alpha})}f\,dS_{\Tilde{g}_1}-\fint\limits_{T_2(\varepsilon,\varepsilon^{\alpha})}f\,dS_{\Tilde{g}_2}\bigg)^2\\
   \geq \frac{\lambda_1(\varepsilon)\min_j|T_j(\varepsilon,\varepsilon^{\alpha})|\min_j|N_j|\omega_{d_j}}{32\varepsilon b(b-1)^2\max_j|N_j|^2\omega_{d_j}^2}\geq \frac{\lambda_1(\varepsilon)\varepsilon^{\alpha m}\min_j|N_j|^2\omega_{d_j}^2}{64\varepsilon m b(b-1)^2\max_j|N_j|^2\omega_{d_j}^2}\\
   \geq\frac{\varepsilon^{\alpha m-1}\lambda_1\min_j|N_j|^2\omega_{d_j}^2}{128m b(b-1)^2\max_j|N_j|^2\omega_{d_j}^2}
\end{gather*}
because for each $j$, we have $\varepsilon^{d_j+1}\geq\varepsilon^{m}$. 
In that case, the Rayleigh--Steklov quotient is bounded below by
\begin{equation}\label{eq:case4}
R(f,\overline{g})\geq\varepsilon^{\alpha m-1} \frac{\lambda_1\min_j|N_j|^2\omega_{d_j}^2}{128mb(b-1)^2\max_j|N_j|^2\omega_{d_j}^2}\,.
\end{equation}

In order to compare this lower bound with the ones in \eqref{eq:case1}, \eqref{eq:case2}, \eqref{eq:case3}, we want to find $\alpha<1$ such that $\alpha m-1 =-\alpha$. We the obtain that $\alpha=\frac{1}{m+1}$.

Considering cases \eqref{eq:case1}, \eqref{eq:case2}, \eqref{eq:case3} and \eqref{eq:case4} and the fact that $b-1\geq1$, we obtain that 
\[R(f,\overline{g})\geq\frac{C}{\varepsilon^{\frac{1}{m+1}}}\]
where $C$ is a finite positive constant given by
\begin{equation}\label{eq:constantC}
C:=\min\bigg\{\frac{\max\{\min_j m-n_j-2,1\}}{4}\,,\frac{\min_j |N_j|^2\omega_{d_j}^2}{16b(b-1)^2}\,,\frac{\lambda_1\min_j|N_j|^2\omega_{d_j}^2}{128mb(b-1)^2\max_j|N_j|^2\omega_{d_j}^2}\bigg\}\,.
\end{equation}

By taking the minimum among all functions $f:\Omega_\varepsilon\to\R$ such that $\int\limits_{\Omega_\varepsilon}f\,dS_{\overline{g}}=0$ and $\int\limits_{\Omega_\varepsilon}f^2\,dS_{\overline{g}}=1$, we show by the variational characterization \eqref{eq:varchar2} that
\[\sigma_1(\Omega_\varepsilon,\overline{g})\geq\frac{C}{\varepsilon^{\frac{1}{m+1}}} \,.\]

Thus, we have that
\[\sigma_1(\Omega_\varepsilon,g)\geq\frac{1}{(1+\varepsilon_0)^{m+1/2}}\sigma_1(\Omega_\varepsilon,\overline{g})\geq\frac{1}{(1+\varepsilon_0)^{m+1/2}}\frac{C}{\varepsilon^{\frac{1}{m+1}}}.\]

By taking the limit inferior of $\varepsilon^{\frac{1}{m+1}}\sigma_1(\Omega_\varepsilon,g)$ as $\varepsilon\to0$, we obtain
\[\liminf\limits_{\varepsilon\to0}\varepsilon^{\frac{1}{m+1}}\sigma_1(\Omega_\varepsilon,g)\geq \frac{C}{(1+\varepsilon_0)^{m+1/2}}\,.\]
Taking the limit as $\varepsilon_0\to0$ in the previous inequality proves the theorem.
\end{proof}

\section{Acknowledgements}
The author would like to thank Jean Lagacé for his useful comments and suggestions and Bruno Colbois for reading an early version of the article.
The author was supported by NSERC. This work is a part of the PhD thesis of the author under the supervision of Alexandre Girouard.

\bibliographystyle{plain}
\bibliography{reference_multiple_excision}

\end{document}